\newcommand{\biset}{\Gamma}
\newcommand{\ra}{\rightarrow}
\def\out{{\rm Out}}
\def\ind{{\rm Ind}}
\def\res{{\rm Res}}
\def\coind{{\rm Coind}}
\def\inf{{\rm Inf}}
\def\infl{{\rm Inf}}
\def\defl{{\rm Def}}
\def\codef{{\rm Codef}}
\def\tin{{\rm Tin}}
\def\des{{\rm Des}}
\def\iso{{\rm Iso}}
\def\hom{{\rm Hom}}
\def\mod{{\rm mod}}
\def\sq{{\mbox{\rm Sq}}}
\def\morp{{\mbox{\rm Mor}}}
\def\vect{{\mbox{\rm vect}}}
\newtheorem{thm}{Theorem}[section]
\newtheorem{pro}[thm]{Proposition}
\newtheorem{cor}[thm]{Corollary}
\newtheorem{lem}[thm]{Lemma}
\newtheorem{prob}[thm]{Problem}
\newtheorem{rem}[thm]{Remark}
\title{Inducing native Mackey functors to biset functors}
\author{Olcay Coşkun}
\address{Boğaziçi University, Department of Mathematics 80815 Bebek, İstanbul, Turkey \\ olcay.coskun@boun.edu.tr }
\begin{document}
\maketitle
\begin{abstract}
In this paper, we describe the induction functor from the category of native Mackey functors to the category of biset functors for
a finite group $G$ over an algebraically closed field $k$ of characteristic zero. We prove two applications of this description. As
the first application, we exhibit that any projective biset functor over $k$ is induced from a (rational) virtual native Mackey functor. 
The second application is the explicit description of the projective indecomposable biset functors parameterized by simple groups.

\end{abstract}




\section{Introduction}

In this paper, we consider the induction functor from the category of native Mackey functors for a finite group $G$ over a field $k$ 
to the category of biset functors for $G$ over $k$. The theory of biset functors is introduced by Serge Bouc \cite{B96} as a unified 
theory of the classical operations in finite group representation theory. A basic result in this theory is the classification of the simple 
objects, which is done by Bouc. According to the classification, the simple functors are parameterized by the pairs $(H,V)$ where 
$H$ runs over all finite groups and $V$ runs over all simple $k\out(H)$-modules, both up to isomorphism. Here $\out(H)$ is the 
group of all outer automorphisms of $H$. On the other hand, the theory of global Mackey functors is an older theory which only deals 
with induction, restriction and transport of structure maps. We call a global Mackey functor defined only on the group $G$ a 
\mbox{\it{native Mackey functor}}, see Section \ref{sec:native} for details. 

In the context of biset functors, native Mackey functors can be identified as 
functors over bi-free bisets and hence there is a forgetful functor from the category of biset functors to that of native Mackey
functors. The left adjoint of this forgetful functor is the functor that we are interested in this paper.

More precisely, we let $G$ be a finite group and let $k$ be an algebraically closed field of characteristic zero and consider 
all biset functors defined only on the set of subquotients of $G$ together with the native Mackey functors for $G$. With this 
restriction on 
the characteristic of $k$, it is well-known that the category of global Mackey functors is semisimple, see \cite{Ba} or \cite{B96}. 
Similarly, in this case, the category of native Mackey functors is also semisimple. Moreover, as we show below, the induction functor 
is also easier to describe. 

As an application of this explicit description of the induction  functor, we are able to prove an induction theorem for projective 
biset functors. According to Theorem \ref{thm:induction}, any projective biset functor is a rational combination of induced 
native Mackey functor. 

We also consider some special cases of this induction functor. We explicitly describe the induction of a simple native Mackey
functor parameterized by a cyclic group. Also when $P$ is a $p$-group for some prime $p$ of order at least $p^3$ and $1$ is the 
trivial module, we show that the induction of the simple native Mackey functor parameterized by the pair $(P,1)$ contains at least
two summands, one of them being the projective cover of the torsion-free Dade group functor. The remaining case of the
elementary abelian $p$-group of rank $2$ is also considered. 

The last case we consider is the case where we induce a simple functor parameterized by a simple non-cyclic group. In this case,
 we 
show that the induced functor is indecomposable, and hence obtain a, fairly explicit, description of this indecomposable projective
functor. Finally, remark that, we have no other example of a non-simple group with the property that the corresponding induced 
simple native Mackey functor is indecomposable and leave the question whether the induced functor being indecomposable 
characterizes the simplicity of the group open.
\section*{Acknowledgement}
I would like to thank to the referee for his/her comments ard corrections.

\section{Preliminaries on biset functors}
In this section, we recall basics of biset functors and introduce our notation. We refer to \cite{B96} and \cite{C3} for details.
Let $G$ be a finite group and let $k$ be an algebraically closed field of characteristic zero. We denote by Sq$(G)$
the set of all subquotients of $G$, that is,
\[
\sq(G) = \{ H/N: N\unlhd H\le G \}.
\]
Following Bouc \cite{B96}, we define the \textit{biset category} $\mathcal C_G: = \mathcal C_{k,G}$ for $G$ over $k$ as the 
category whose set of objects are the groups in $\sq(G)$. Given $H, K\in \sq(G)$, we put
\[
\morp_{\mathcal C_G}(H,K) = k\otimes B(K\times H)
\]
where $B(K\times H)$ is the Grothendieck group of $(K,H)$-bisets. The composition of morphisms is the linear extension of 
the well-known amalgamated product of bisets.

Now a \textit{biset functor} for $G$ over $k$ is a $k$-linear functor from the biset category $\mathcal C_G$ for $G$ to the category 
${}_k\vect$ of
finite dimensional $k$-vector spaces. (Note that one can define biset functors for $G$ over any commutative ring with unity, but
in this paper, we concentrate only on this special case.) Further we denote by $\mathcal F_G: = \mathcal F_{k,G}$ the 
category of biset functors for $G$ over $k$, with the morphisms given by the natural transformations of biset functors. The 
category $\mathcal F_G$ is abelian. The parametrization of its simple objects follows from Bouc's parametrization of biset 
functors \cite{B96}. According to \cite[Theorem 4.3.10]{B96}, there is a bijective correspondence between the set of isomorphism
classes of simple biset functors for $G$ over $k$ and the set of pairs $(H,V)$ where $H$ is a subquotient of $G$ and $V$ is 
a simple $k\out(H)$-module, both taken up to isomorphism. We denote a representative of the isomorphism class of the simple 
biset functor corresponding to the pair $(H,V)$ by $S_{H,V}^G$. 

\begin{rem} The category $\mathcal C_G$ is a full subcategory of the biset category $\mathcal C$, defined in \cite{B96}.
By the general theory of induction-restriction, it is straightforward to conclude that the simple biset functors $S_{H,V}^G$ for $G$
are restrictions to $\mathcal C_G$ of the simple biset functors $S_{H,V}$ defined on the category $\mathcal C$. 
\end{rem}

Another way to define a biset functor for $G$ is to consider the category algebra of the category $\mathcal C_G$. 
Precisely, we let $\Gamma(G):= \Gamma_k(G)$ be the algebra generated by all morphisms in $\mathcal C_G$, with the 
multiplication induced by the composition of morphisms. Following Barker \cite{Ba}, we call $\Gamma(G)$ the \textit{alchemic
algebra} for $G$ over $k$. By Bouc's decomposition formula, \cite[Lemma 2.3.26]{B96}, the alchemic algebra is generated by
the set of all bisets given by the following list.
\begin{enumerate}
\item $\tin_{K/N}^H:= \ind_K^H\inf_{K/N}^K$ for all $N\unlhd K\le H\in \sq(G)$,
\item $\des_{K/N}^H:= \defl_{K/N}^K\res_K^H$ for all $N\unlhd K\le H\in \sq(G)$,
\item $\iso_{H,H'}^\lambda$ for all isomorphisms $\lambda:H'\ra H$ with $H,H'\in \sq(G)$.
\end{enumerate}
These generators are subject to a list of conditions determined by the Mackey product formula \cite[Lemma 2.3.24]{B96}. With
this definition, the category of finitely generated $\Gamma(G)$ modules is equivalent to the category $\mathcal F_G$ of biset
functors for $G$. The proof of the equivalence is standard. We refer to \cite{C3} for further details on this approach.
We only recover a special subalgebra structure of the alchemic algebra which will be useful later.

Let $\Delta:= \Delta(G):= \Delta_k(G)$ (resp. $\nabla:= \nabla(G):=\nabla_k(G)$) be the subalgebra of $\Gamma_k(G)$ generated 
by the bisets $\tin_K^H$ and $\iso_{H,H'}^\lambda$ (resp. $\des_K^H$ and $\iso_{H,H'}^\lambda$). We also denote by $\Omega:= 
\Omega(G):=\Omega_k(G)$ the subalgebra generated by the bisets $\iso_{H,H'}^\lambda$. Then we have the following triangle 
summarizing this subalgebra structure of the alchemic algebra.

\[
\begindc{\commdiag}[30]
\obj(70,10)[V]{$\Gamma$}
\obj(60,0)[B]{$\Delta$}
\obj(80,0)[C]{$\nabla$}
\obj(90,-10)[E5]{$\Omega$}
\obj(70,-10)[E4]{$\Omega$}
\obj(50,-10)[E3]{$\Omega$}
\mor{C}{V}{}[\atleft,\solidarrow]
\mor{B}{V}{}[\atleft,\solidarrow]

\mor{E5}{C}{}[\atleft,\solidarrow]
\mor{C}{E4}{}[\atleft,\solidarrow]
\mor{B}{E4}{}[\atleft,\solidarrow]
\mor{E3}{B}{}[\atleft,\solidarrow]

\mor(70,-10){E3}{}[\atleft, \solidline]
\mor(70,-11)(90,-11){}[\atleft, \solidline]
\mor(70,-10)(90,-10){}[\atleft, \solidline]

\mor(70,-11)(50,-11){}[\atleft, \solidline]
\enddc
\]
In the above diagram, all arrows going upward are inclusions, and the down arrows in the last row are quotient maps
from the corresponding algebra to its quotient by the ideal generated by all non-iso generators.
These algebra morphisms induce several induction, coinduction and restriction functors between the corresponding categories 
of modules. We also have several natural isomorphisms between these functors and natural equivalences between module
categories. The key result to prove such equivalences is the following theorem. Note that this is an alchemic version of Theorem
3.2 in \cite{C1} where the author uses a similar triangle in the context of ordinary Mackey functors. 

\begin{thm}\label{thm:tensor-alchemic}
Assume the above notation. Then there is an isomorphism 
\[
{}_\Delta\Gamma_\nabla \cong \Delta\otimes_\Omega \nabla
\]
of $(\Delta,\nabla)$-bimodules, where we regard $\Gamma$ as a $(\Delta,\nabla)$-bimodule via left and right multiplication.
\end{thm}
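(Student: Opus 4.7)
The plan is to construct the obvious multiplication map
\[
\mu: \Delta \otimes_\Omega \nabla \to \Gamma, \qquad a \otimes b \longmapsto a \cdot b,
\]
and prove it is an isomorphism of $(\Delta, \nabla)$-bimodules. Well-definedness and $\Omega$-balancedness are immediate from the fact that $\Omega$ is a common subalgebra of both $\Delta$ and $\nabla$ inside $\Gamma$, together with the associativity of multiplication in $\Gamma$; the bimodule structure is automatic since $\mu$ is induced by left and right multiplication.

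For surjectivity, I would invoke Bouc's decomposition formula \cite[Lemma 2.3.26]{B96}, which writes every transitive $(K,H)$-biset in the canonical form
\[
\ind_B^K \inf_{B/A}^B \iso_{B/A, D/C}^\phi \defl_{D/C}^D \res_D^H.
\]
Regrouping the first two arrows as $\tin_{B/A}^K \in \Delta$ and the last two as $\des_{D/C}^H \in \nabla$, and absorbing the middle iso $\iso^\phi \in \Omega$ into, say, the right-hand factor, exhibits every such generator of $\Gamma$ as $\mu(\tin_{B/A}^K \otimes \iso^\phi \des_{D/C}^H)$. Since transitive bisets span $\Gamma$, surjectivity follows.

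The main obstacle is injectivity, which I would tackle by comparing explicit bases. One first shows that $\Delta$ is free as a right $\Omega$-module on representatives $\tin_{B/A}^K$ of the sections $B/A$ of $K$ taken modulo $K$-conjugation, and dually that $\nabla$ is free as a left $\Omega$-module on the $\des_{D/C}^H$. This yields a basis of $\Delta \otimes_\Omega \nabla$ indexed by such pairs of sections together with an $\Omega$-morphism $\iso^\phi: D/C \to B/A$. On the other side, the Goursat/butterfly classification underlying \cite[Lemma 2.3.26]{B96} furnishes a basis of $\Gamma$ indexed by exactly the same data, namely the Bouc quintuples $(B, A, \phi, D, C)$ modulo $(K \times H)$-conjugation. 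The delicate point is to verify that this latter equivalence relation is precisely the $\Omega$-balancing built into the tensor product; this amounts to reading off the Mackey product formula \cite[Lemma 2.3.24]{B96} for $\tin$-$\des$ compositions and confirming that no extra relations arise. Once the basis match is established, $\mu$ carries a basis to a basis and the isomorphism follows. An alternative, arguably smoother, route would be to transcribe the argument of \cite[Theorem 3.2]{C1}, which handles the analogous identity in the ordinary Mackey-functor setting, into the alchemic framework used here.
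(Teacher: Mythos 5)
Your proposal is correct and takes essentially the same approach as the paper: the paper's proof consists solely of a pointer to \cite[Theorem 3.2]{C1}, and the explicit basis-matching argument you sketch (identifying the tensor-product basis coming from Bouc's decomposition with the Bouc-quintuple basis of $\Gamma$) is precisely the content of that cited proof transcribed to the alchemic algebra, which you also explicitly name as the alternative route at the end. The one place to be careful is the ``delicate point'' you flag yourself — verifying that the $(K\times H)$-conjugacy relation on quintuples matches the $\Omega$-balancing relation in the tensor product — but this is exactly the computation carried out in \cite{C1}, so your reduction is sound.
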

The proof of this theorem follows the same lines of the proof of Theorem 3.2 in \cite{C1}. Now the following theorem, which is
a version of Theorem 3.4 in \cite{C1}, also holds for in the case of the alchemic algebra. The proof is again almost the same
as the proof in \cite{C1}.
\begin{thm}\label{thm:alchemic-equivs}
The following equivalences hold.
\begin{enumerate}
\item $\res^\Gamma_\Delta\ind_\nabla^\Gamma \cong \ind_\Omega^\Delta\res^\nabla_\Omega$.
\item $\res^\Gamma_\nabla\coind_\Delta^\Gamma \cong \coind_\Omega^\nabla\res^\Delta_\Omega$.
\end{enumerate}
\end{thm}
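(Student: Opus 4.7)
My plan is to derive both equivalences as formal consequences of the bimodule isomorphism ${}_\Delta\Gamma_\nabla \cong \Delta\otimes_\Omega\nabla$ supplied by Theorem~\ref{thm:tensor-alchemic}, using only the standard tensor--hom yoga. Recall that $\ind_\nabla^\Gamma = \Gamma\otimes_\nabla -$ and $\coind_\Delta^\Gamma = \hom_\Delta(\Gamma, -)$, while restriction is just viewing modules through the relevant subalgebra inclusion. So the game is entirely one of rewriting $\Gamma$ as $\Delta\otimes_\Omega\nabla$ inside these functorial expressions.

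\textbf{Proof of (1).} For a left $\nabla$-module $N$, substitute and associate:
\[
\res^\Gamma_\Delta\ind_\nabla^\Gamma N \;=\; \Gamma\otimes_\nabla N \;\cong\; (\Delta\otimes_\Omega\nabla)\otimes_\nabla N \;\cong\; \Delta\otimes_\Omega(\nabla\otimes_\nabla N) \;\cong\; \Delta\otimes_\Omega N \;=\; \ind_\Omega^\Delta\res^\nabla_\Omega N.
\]
The crucial point is that the $\Omega$-action on $N$ appearing after cancellation is exactly the one inherited via $\Omega\hookrightarrow \nabla$, i.e.\ $\res^\nabla_\Omega N$, and that the residual left $\Delta$-action is that of left multiplication on the first tensor factor, matching $\res^\Gamma_\Delta$. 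Naturality in $N$ is immediate.

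\textbf{Proof of (2).} For a left $\Delta$-module $M$, apply the tensor--hom adjunction to the same bimodule identity:
\[
\res^\Gamma_\nabla\coind_\Delta^\Gamma M \;=\; \hom_\Delta(\Gamma, M) \;\cong\; \hom_\Delta(\Delta\otimes_\Omega\nabla, M) \;\cong\; \hom_\Omega\bigl(\nabla, \hom_\Delta(\Delta, M)\bigr) \;\cong\; \hom_\Omega(\nabla, M) \;=\; \coind_\Omega^\nabla\res^\Delta_\Omega M.
\]
The final $\Omega$-module $M$ is acted on through $\Omega\hookrightarrow\Delta$, which is $\res^\Delta_\Omega M$, and the surviving left $\nabla$-action on $\hom_\Omega(\nabla,M)$ is the one induced by right multiplication of $\nabla$ on itself, as required.

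\textbf{Main obstacle.} The algebra in both parts is formal; what genuinely needs verification—and what I expect to consume the real work, exactly as in the Mackey-functor analogue \cite[Thm.~3.4]{C1}—is the compatibility of bimodule structures. One has to confirm that the $(\Delta,\nabla)$-bimodule structure on $\Gamma$ coming from left and right multiplication in the alchemic algebra is carried by the isomorphism of Theorem~\ref{thm:tensor-alchemic} to the obvious bimodule structure on $\Delta\otimes_\Omega\nabla$, and further that the auxiliary $(\Omega,\Omega)$-action used implicitly in the middle tensor factor is consistent on both sides. Once this bookkeeping is in place, the isomorphisms above are natural in their respective arguments and the two equivalences follow.
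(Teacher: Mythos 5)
Your proof is correct and takes essentially the same route as the paper, which simply defers to the proof of Theorem~3.4 in \cite{C1}: a formal tensor--hom computation applied to the $(\Delta,\nabla)$-bimodule isomorphism ${}_\Delta\Gamma_\nabla \cong \Delta\otimes_\Omega\nabla$ of Theorem~\ref{thm:tensor-alchemic}. Your ``main obstacle'' paragraph correctly isolates where the actual content lies (the bimodule compatibility), which is exactly what Theorem~\ref{thm:tensor-alchemic} packages; given that, the two equivalences follow by the cancellations $\nabla\otimes_\nabla N\cong N$ and $\hom_\Delta(\Delta,M)\cong M$ as you wrote.
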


\section{Native Mackey functors}\label{sec:native}

Other than the biset functors, there are several other natural constructions of functors associated to
bisets, like global Mackey functors, inflation functors and restriction functors. In this paper, we are mainly interested in a local
version of global Mackey functors, called \textit{native Mackey functors}. By definition, a native Mackey functor is a kind of
global Mackey functor which is defined only on the subquotients of a fixed finite group $G$. In other words, it is a biset functor for
$G$ without inflation and deflation maps. 
In this section, we introduce the
formal definition and basic properties of native Mackey functors. We shall use the module theoretic approach, however it is 
possible to define them in a way similar to the first definition of a biset functor. 

We denote by $\mu:= \mu(G):=\mu_k(G)$ the subalgebra of $\Gamma(G)$ generated by the bi-free bisets. In other words, it is the 
subalgebra of $\Gamma(G)$ generated by the following list of generators.
\begin{enumerate}
\item $\tin_{K}^H= \ind_K^H$ for all $K\le H\in \sq(G)$,
\item $\des_{K}^H= \res_K^H$ for all $K\le H\in \sq(G)$,
\item $\iso_{H,H'}^\lambda$ for all isomorphisms $\lambda:H'\ra H$ with $H,H'\in \sq(G)$.
\end{enumerate}
A module over $\mu_k(G)$ is called a \textit{native Mackey functor for $G$ over $k$}. We denote the category of all native Mackey functors 
for $G$ by ${}_{\mu(G)}\mod$ and call the algebra $\mu(G)$ the \textit{native Mackey algebra}.

Notice that a native Mackey functor is a structure very similar to the global Mackey functors, considered by Webb in \cite{W}. 
Indeed, if $M$ is a global Mackey functor, then the restriction of this functor to the set of all subquotients of $G$ would give a 
native Mackey functor. On the other hand, an ordinary Mackey functor is not necessarily a native Mackey functor, since in the 
case of ordinary Mackey functors, one is only allowed to consider isomorphisms induced by the conjugation action of $G$ on the
subgroups, whereas in our case, all isomorphisms between subquotients of $G$ are allowed. 

Moreover, the native Mackey algebra is an idempotent truncation 
\[
\mu_k(G) = e_G \mu_k^{1,1} e_G
\]
of the global Mackey algebra $\mu^{1,1}_k$ of \cite{W}. Here the idempotent $e_G$ is given by 
\[
e_G = \sum_{H\in\tiny{\sq}(G)}\iso_{H,H}^1.
\]

Therefore we 
can specialize results on the global Mackey functors in \cite{B96} and \cite{W} to the case of the native Mackey functors, easily. 
We give a couple of these results below. (cf. \cite[Theorem 9.5, ]{W} and \cite[Theorem 4.3.10]{B96})
\begin{thm}
Let $G$ be a finite group and $k$ be an algebraically closed field of characteristic zero. Then
\begin{enumerate}
\item The native Mackey algebra $\mu_k(G)$ is semisimple. 
\item The simple $\mu_k(G)$-modules are parameterized by the set of pairs $(H,V)$ where $H$ runs over the isomorphism 
classes of subquotients of $G$ and $V$ runs over the isomorphism classes of simple $k\out(H)$-modules. The simple functor
corresponding to the pair $(H,V)$ is denoted by $S_{H,V}^\mu$. It is characterized by the property that $H$ is the group of minimal
order such that $S_{H,V}(H)\neq 0$ and $V = S_{H,V}(H)$.
\end{enumerate}
\end{thm}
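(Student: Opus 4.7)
The plan is to deduce both statements from the corresponding results about the global Mackey algebra $\mu_k^{1,1}$ of Webb, using the identification $\mu_k(G) = e_G\mu_k^{1,1}e_G$ noted just above the theorem together with the standard theory of idempotent truncations.

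For (1), I would first invoke Webb's theorem \cite[Theorem 9.5]{W} that $\mu_k^{1,1}$ is semisimple under the hypothesis that $k$ has characteristic zero. (Strictly speaking, $\mu_k^{1,1}$ is an algebra with enough idempotents rather than a unital algebra, but the relevant statement is that every finitely generated module over it is semisimple.) Once this is available, semisimplicity transfers along idempotent truncation: for any idempotent $e$ in a semisimple $k$-algebra $A$, the algebra $eAe$ is again semisimple, because any short exact sequence of $eAe$-modules can be viewed (via the induction functor $A e\otimes_{eAe}-$) as a direct summand of an exact sequence of $A$-modules, and one then applies $e(-)$ to conclude splitness. Applied to $e_G$ and $\mu_k^{1,1}$ this yields semisimplicity of $\mu_k(G)$.

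For (2), I would use the classical bijection for idempotent truncations: the assignment $S\mapsto e_G S$ gives a bijection between isomorphism classes of simple $\mu_k^{1,1}$-modules $S$ with $e_G S\neq 0$ and isomorphism classes of simple $e_G\mu_k^{1,1}e_G$-modules. Now a simple global Mackey functor $S_{H,V}$ (parameterized, by \cite[Theorem 9.5]{W} or \cite[Theorem 4.3.10]{B96} applied in the Mackey setting, by a pair $(H,V)$ with $H$ any finite group up to isomorphism and $V$ a simple $k\out(H)$-module) satisfies $e_G S_{H,V}\neq 0$ exactly when $S_{H,V}(K)\neq 0$ for some $K\in\sq(G)$. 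Because $H$ is the minimal group on which $S_{H,V}$ is nonzero, this condition is equivalent to $H$ being isomorphic to a subquotient of $G$, so the surviving pairs are precisely those with $H\in\sq(G)$ (taken up to isomorphism). Defining $S_{H,V}^\mu:=e_G S_{H,V}$ then gives the parametrization of the theorem.

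The characterization at the end of (2) is a consequence of the analogous characterization at the level of global Mackey functors: minimality of $|H|$ among subquotients on which $S_{H,V}^\mu$ is nonvanishing follows from the same property of $S_{H,V}$, and the evaluation $S_{H,V}^\mu(H)=V$ as a $k\out(H)$-module is again inherited from $S_{H,V}(H)=V$. The main subtlety to check is that no two distinct pairs $(H,V)$ and $(H',V')$ with $H,H'\in\sq(G)$ can yield isomorphic truncations, but this is immediate from the same minimality argument, since an isomorphism $S_{H,V}^\mu\cong S_{H',V'}^\mu$ forces $H\cong H'$ by comparing minimal groups and then $V\cong V'$ by comparing evaluations at $H$. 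The only genuine work, therefore, is importing Webb's classification faithfully, and the slight bookkeeping needed because $\mu_k^{1,1}$ is non-unital; I do not anticipate any serious obstacle beyond this.
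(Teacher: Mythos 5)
Your proof is correct and follows essentially the same route as the paper, which likewise deduces the statement from the identification $\mu_k(G)=e_G\mu_k^{1,1}e_G$ together with the standard theory of idempotent truncations, citing Webb and Bouc for the ambient semisimplicity and classification. You have simply supplied the bookkeeping (the bijection $S\mapsto e_G S$ and the nonvanishing criterion) that the paper leaves implicit.
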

The proof of this theorem follows from the general results on modules of the truncated algebras. A construction of the simple 
native Mackey functors follows from the construction of simple global Mackey functors. However, in the rest of the paper, another
construction, similar to the construction of the simple ordinary Mackey functors given in \cite{C1} will be more useful. For the rest
of the section, we adapt certain results from \cite{C1} to the native case, without formal proofs.

First, we need to introduce a subalgebra structure of $\mu(G)$. This structure is very similar to the structure introduced above for
the alchemic algebra and hence to the structure introduced in \cite{C1} for the ordinary Mackey algebra. Therefore the results
that are recalled above for the alchemic algebra will also hold for the native Mackey algebra, as explained below.

We denote by
$\tau:= \tau(G):=\tau_k(G)$ (resp. $\rho:= \rho(G):=\rho_k(G)$) the subalgebra of $\mu$ generated by the bisets $\ind_K^H$ and 
$\iso_{H,H'}^\lambda$ (resp. $\res_K^H$ and $\iso_{H,H'}^\lambda$). We still denote by $\Omega$ the
subalgebra generated by the bisets $\iso_{H,H'}^\lambda$. 

Together with the alchemic algebra, we can represent this subalgebra structure of the alchemic algebra by the following diagram.
\[
\begindc{\commdiag}[30]
\obj(70,30)[X]{$\Gamma$}
\obj(70,10)[V]{$\mu$}
\obj(60,0)[B]{$\tau$}
\obj(80,0)[C]{$\rho$}
\obj(90,-10)[E5]{$\Omega$}
\obj(70,-10)[E4]{$\Omega$}
\obj(50,-10)[E3]{$\Omega$}

\mor{V}{X}{}[\atleft,\solidarrow]

\mor{C}{V}{}[\atleft,\solidarrow]
\mor{B}{V}{}[\atleft,\solidarrow]

\mor{E5}{C}{}[\atleft,\solidarrow]
\mor{C}{E4}{}[\atleft,\solidarrow]
\mor{B}{E4}{}[\atleft,\solidarrow]
\mor{E3}{B}{}[\atleft,\solidarrow]

\mor(70,-10){E3}{}[\atleft, \solidline]
\mor(70,-11)(90,-11){}[\atleft, \solidline]
\mor(70,-10)(90,-10){}[\atleft, \solidline]

\mor(70,-11)(50,-11){}[\atleft, \solidline]
\enddc
\]

In this diagram, as in the previous one, all arrows going upward are inclusions, and the down arrows in the last row are quotient 
maps from the corresponding algebra to its quotient by the ideal generated by all non-iso generators. Again, there are several
natural equivalences between functors induced by these maps.
 We refer to \cite{C1} and \cite{C3} for the full descriptions of such equivalences. We only note that the equivalences
 in \cite{C1} that are stated in the case of the ordinary Mackey algebra are also valid for the native Mackey algebra. In this paper,
we only need several of these equivalences. For the readers convenience, we shall state the native versions of the results 
that we use in this paper. Finally, note that there is an ambiguity in our notation since the ordinary Mackey algebra is also 
denoted by $\mu$ and the ordinary versions of the algebras $\tau$ and $\rho$ above are also denoted by the same symbol.
However, we will not use the ordinary versions of these algebras in this paper. So the ambiguity should not be a problem.

The first result determines the $(\tau,\rho)$-bimodule structure of the native Mackey algebra $\mu(G)$ and it is actually the key 
for the rest of the equivalences.
\begin{thm}\label{thm:tensor-bimodule}
Assume the above notation. Then there is an isomorphism 
\[
{}_\tau\mu_\rho \cong \tau\otimes_\Omega \rho
\]
of $(\tau,\rho)$-bimodules, where we regard $\mu$ as a $(\tau,\rho)$-bimodule via left and right multiplication.
\end{thm}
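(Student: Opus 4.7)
The plan is to define the natural multiplication map
\[
\phi\colon \tau\otimes_\Omega \rho \longrightarrow \mu,\qquad a\otimes b \longmapsto a\cdot b,
\]
with the product taken inside $\mu$ via the inclusions $\tau,\rho\hookrightarrow\mu$, and to prove it is an isomorphism of $(\tau,\rho)$-bimodules by comparing natural $k$-bases on the two sides. Well-definedness on the tensor product over $\Omega$ is immediate, since for any $\omega\in\Omega$ the associativity of the product in $\mu$ gives $(a\omega)\cdot b = a\cdot(\omega b)$. That $\phi$ is a bimodule map is built into the construction, left multiplication by $\tau$ and right multiplication by $\rho$ acting identically on both sides.

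For bijectivity, I would invoke the standard parametrization of transitive bi-free bisets. Specializing Bouc's decomposition formula \cite[Lemma 2.3.26]{B96} to the bi-free situation yields a $k$-basis of $\mu$ consisting of the elements
\[
\ind_{L'}^K\,\iso^{\psi}_{L',L}\,\res_L^H,
\]
parametrized by quintuples $(H,K,L,L',\psi)$ with $H,K\in\sq(G)$, $L\le H$, $L'\le K$ and $\psi\colon L\xrightarrow{\sim}L'$, taken modulo the equivalence induced by $H$-conjugation on $L$ and $K$-conjugation on $L'$ (with $\psi$ adjusted accordingly). Dually, $\tau$ has a basis of elements of shape $\ind_{L'}^K\iso^{\alpha}_{L',M}$ and $\rho$ has a basis of elements of shape $\iso^{\beta}_{N,L}\res_L^H$. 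Tensoring over $\Omega$ amalgamates the two inner isomorphism factors into a single one, producing a basis of $\tau\otimes_\Omega\rho$ indexed by exactly the same set of quintuples. Under $\phi$, each such basis element of $\tau\otimes_\Omega\rho$ is sent to the corresponding basis element of $\mu$, so $\phi$ is a $k$-linear bijection.

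The main obstacle, exactly as in the proof of Theorem~3.2 of \cite{C1}, is the $\Omega$-bookkeeping needed to verify that two simple tensors of the form $\ind_{L'}^K\iso^{\alpha}\otimes \iso^{\beta}\res_L^H$ coincide in $\tau\otimes_\Omega\rho$ precisely when the associated transitive bi-free bisets are isomorphic in $\mu$. Concretely, one has to track how an isomorphism absorbed into an $\iso$-factor on one side of the tensor symbol can be transferred across it, and how this transfer interacts with the two ambient conjugation actions on $L$ and $L'$. The only substantive difference from the ordinary Mackey setting of \cite{C1} is that here arbitrary abstract isomorphisms between subquotients of $G$ are allowed, rather than only those induced by $G$-conjugation; but this enlarges $\Omega$ uniformly on both sides of the tensor, so the matching of bases transcribes verbatim and the isomorphism follows.
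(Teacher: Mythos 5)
Your proposal is correct and follows the same route the paper takes: the paper's own ``proof'' simply defers to the argument of Theorem~3.2 in \cite{C1} (the ordinary Mackey analogue), and you reconstruct that argument by defining the multiplication map $\tau\otimes_\Omega\rho\to\mu$ and matching the basis of bi-free transitive bisets $\ind\cdot\iso\cdot\res$ against the amalgamated $\ind\cdot\iso\otimes\iso\cdot\res$ tensors. You also correctly identify (and, like the paper, leave to \cite{C1}) the one substantive verification, namely that the $\Omega$-balanced relations together with the two conjugation actions on $L\le H$ and $L'\le K$ cut the spanning set of $\tau\otimes_\Omega\rho$ down to exactly the quintuple-parametrized basis of $\mu$, the only change from \cite{C1} being that $\Omega$ now records all abstract isomorphisms of subquotients rather than only $G$-conjugations.
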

As we have remarked in the case of the alchemic algebra, the proof of this theorem follows the same steps of the proof of 
\cite[Theorem 3.2]{C1} with the only difference that this time we are working with isomorphisms instead of conjugation maps.
Using this result, one gets the following natural equivalences (cf. \cite[Theorem 3.4 and Proposition 3.6]{C1}). 
\begin{thm}\label{thm:native-equivs}
The following equivalences hold.
\begin{enumerate}
\item $\res^\mu_\tau\ind_\rho^\mu \cong \ind_\Omega^\tau\res^\rho_\Omega$.
\item $\res^\mu_\rho\coind_\tau^\mu \cong \coind_\Omega^\rho\res^\tau_\Omega$.
\item $\defl^\tau_\Omega\ind_\Omega^\tau\cong {\mbox{\rm Id}}_\Omega \cong \codef^\rho_\Omega\coind_\Omega^\rho.$
\item $\defl^\tau_\Omega \infl_\Omega^\tau \cong {\mbox{\rm Id}}_\Omega \cong \codef^\rho_\Omega\infl_\Omega^\rho.$
\end{enumerate}
\end{thm}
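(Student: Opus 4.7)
The plan is to follow the strategy of Theorem \ref{thm:alchemic-equivs} and of Theorem 3.4 and Proposition 3.6 in \cite{C1}: use Theorem \ref{thm:tensor-bimodule} for parts (1) and (2), and for parts (3) and (4) exploit the fact that the composition $\Omega \hookrightarrow \tau \twoheadrightarrow \Omega$ (subalgebra inclusion followed by the quotient by the ideal generated by the non-iso generators, and similarly for $\rho$) is the identity on $\Omega$. Throughout, I identify the functors with the standard tensor-product and Hom functors along the algebra maps in the triangle: along the inclusions $\Omega \hookrightarrow \tau \hookrightarrow \mu$ and $\Omega \hookrightarrow \rho \hookrightarrow \mu$, induction is the $\otimes$-functor and coinduction is the $\hom$-functor; along the quotient maps $\tau \twoheadrightarrow \Omega$ and $\rho \twoheadrightarrow \Omega$, inflation is restriction of scalars and deflation (resp.\ codeflation) is its left (resp.\ right) adjoint.

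For (1), given a left $\rho$-module $N$, Theorem \ref{thm:tensor-bimodule} gives
\[
\res^\mu_\tau\ind_\rho^\mu N \;=\; {}_\tau \mu \otimes_\rho N \;\cong\; (\tau \otimes_\Omega \rho) \otimes_\rho N \;\cong\; \tau \otimes_\Omega N \;=\; \ind_\Omega^\tau \res^\rho_\Omega N,
\]
naturally in $N$. For (2), dualizing via the tensor-hom adjunction, a $\tau$-module $M$ satisfies
\[
\res^\mu_\rho\coind_\tau^\mu M \;=\; \hom_\tau({}_\tau\mu_\rho, M) \;\cong\; \hom_\tau(\tau \otimes_\Omega \rho, M) \;\cong\; \hom_\Omega(\rho, \hom_\tau(\tau, M)) \;\cong\; \coind_\Omega^\rho \res^\tau_\Omega M.
\]
These are the direct analogues of the alchemic computations and, after unwinding the bimodule conventions, go through unchanged.

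For (3) and (4), the point is that the inclusion $\Omega \hookrightarrow \tau$ followed by the quotient $\tau \twoheadrightarrow \Omega$ is literally the identity on $\Omega$, since the quotient kills precisely the ideal generated by the non-iso generators of $\tau$, leaving the iso-part intact. From this it follows that for any $\Omega$-module $V$,
\[
\defl^\tau_\Omega\ind_\Omega^\tau V \;=\; {}_\Omega\Omega_\tau \otimes_\tau (\tau \otimes_\Omega V) \;\cong\; \Omega \otimes_\Omega V \;\cong\; V,
\]
and analogously $\codef^\rho_\Omega\coind_\Omega^\rho V \cong \hom_\rho(\Omega, \hom_\Omega(\rho, V)) \cong \hom_\Omega(\Omega \otimes_\rho \rho, V) \cong V$, proving (3). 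For (4), since $\infl_\Omega^\tau V$ is obtained from $V$ by restriction of scalars along $\tau \twoheadrightarrow \Omega$, its $\tau$-action factors through $\Omega$, so
\[
\defl^\tau_\Omega\infl_\Omega^\tau V \;=\; \Omega \otimes_\tau \infl_\Omega^\tau V \;\cong\; \Omega \otimes_\Omega V \;\cong\; V,
\]
and the codeflation version is identical with $\hom$ in place of $\otimes$.

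The only point that requires care, and the main obstacle, is bookkeeping: in (2) one must verify that the two adjunction isomorphisms preserve the correct residual bimodule structures so that the composed isomorphism is $\rho$-linear (not merely $k$-linear); and in (3)-(4) one must be precise about which algebra acts on which side of $\Omega$, since the symbol $\Omega$ appears simultaneously as a subalgebra of $\tau$ and $\rho$ and as a quotient of each. Once the left/right module structures are fixed by reading off the diagram, the isomorphisms are essentially forced by Theorem \ref{thm:tensor-bimodule} and the identity factorization above.
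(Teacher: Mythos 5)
Your proposal is correct and takes essentially the same route as the paper, which simply remarks that the proof follows \cite[Theorem 3.4 and Proposition 3.6]{C1} using Theorem \ref{thm:tensor-bimodule} as the key input. You have spelled out the tensor-hom computations that the paper leaves implicit, and the central observations you isolate — substituting $\tau\otimes_\Omega\rho$ for ${}_\tau\mu_\rho$ in (1)--(2), and exploiting that $\Omega\hookrightarrow\tau\twoheadrightarrow\Omega$ (and likewise for $\rho$) is the identity in (3)--(4) — are exactly what the cited argument in \cite{C1} uses.
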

The proof of this theorem is again very similar to the proof of \cite[Theorem 3.4]{C1}. Indeed, the proof of the cited theorem 
only uses general results together with \cite[Theorem 3.2]{C1}, whose native version is given above. 

Now the construction of simple functors can be carried out exactly as the construction of simple Mackey functors explained in
Section 6 of \cite{C1}. Indeed, it is easy to see that any simple $\Omega_k(G)$-module is of the form $S_{H,V}^\Omega$
where $S_{H,V}^\Omega (H) = V$ and its evaluation is equal to zero at any other subquotient not isomorphic to $H$. Here
$H$ is a subquotient of $G$ and $V$ is a simple $k\out(H)$-module. Then
as in \cite[Proposition 6.1]{C1}, the simple $\tau$-modules and the simple $\rho$-modules are just the corresponding inflations of
the simple $\Omega$-modules, that is, any simple $\rho$-module is of the form 
\begin{equation}\label{eqn:simple0}
S_{H,V}^\rho := \infl_{\Omega}^\rho S_{H,V}^\Omega
\end{equation}
as $S_{H,V}^\Omega$ runs over all simple $\Omega$-modules. Similar remark holds for simple $\tau$-modules.

 Finally, we have isomorphisms
\begin{equation}\label{eqn:simples}
S_{H,V}^\mu\cong \ind_\rho^\mu\infl_\Omega^\rho S_{H,V}^\Omega\cong \coind_\tau^\mu\inf_\Omega^\tau S_{H,V}^\Omega.
\end{equation}
Here the simplicity of the induced and the coinduced functors follows easily since they are indecomposable and the native 
Mackey algebra is semisimple. They are also isomorphic by Schur's Lemma since there is a non-zero map between them induced by 
the identity homomorphism $V\to V$, see Corollary 5.10 in \cite{C1} for the derivation of this map. Finally, the first isomorphism is just 
the identification coming from the parametrization of the simple functors. 

Note also that, since the native Mackey algebra is semisimple, the first isomorphism above actually gives an equivalence of 
categories
\[
\ind_{\rho}^{\mu}\inf_{\Omega}^{\rho}: {}_\Omega\mod\ra {}_\mu\mod
\]
with the inverse equivalence 
\[
\defl_{\Omega}^{\tau}\res^{\mu}_{\tau}: {}_\mu\mod\ra {}_\Omega\mod.
\]
Indeed, the composition $\defl_{\Omega}^{\tau}\res^{\mu}_{\tau}\ind_{\rho}^{\mu}\inf_{\Omega}^{\rho}$ is naturally isomorphic
to the identity functor on $\Omega$-mod by Theorem \ref{thm:native-equivs}. To see that the other composition is naturally 
isomorphic to the identity functor on $\mu$-mod, one needs to show that there is an 
isomorphism
\begin{equation}\label{eqn:twin}
M\cong \ind_{\rho}^{\mu}\inf_{\Omega}^{\rho}\defl_{\Omega}^{\tau}\res^{\mu}_{\tau} M
\end{equation}
of native Mackey functors for any native Mackey functor $M$. Since $\mu$ is semisimple, it is sufficient to prove the 
isomorphism only for the simple native Mackey functors. So let $M= S_{H,V}^\mu$ for some $(H,V)$. Then by the definition of 
the deflation functor, for any subquotient $K$ of $G$, the module $\defl_{\Omega}^{\tau}\res^{\mu}_{\tau} M (K)$ is the quotient 
of $M(K)$  by the sum of images of all induction maps to $K$. But since $M$ is simple, this quotient will be non-zero only if 
$K\cong H$ in which case, it will be equal to $M(K)$. In particular, there is an isomorphism
\[
\defl_{\Omega}^{\tau}\res^{\mu}_{\tau} S_{H,V}^\mu \cong S_{H,V}^\Omega.
\]
Now the result follows form the above construction of the simple native Mackey functors.
\section{The induction functor $\ind_{\mu(G)}^{\Gamma(G)}$}\label{evaluationSection}
We are interested in the induction functor $\ind_\mu^\Gamma:= \ind_{\mu(G)}^{\Gamma(G)} = \Gamma(G)\otimes_{\mu(G)} -$.
Note that, in general, the right $\mu(G)$-module structure of $\Gamma(G)$ is very complicated and hence it is difficult to describe 
the induction functor. However, in our case, when the coefficients are from a field of characteristic zero, the problem can be 
simplified by using the equivalences of the previous section. Explicitly, we have the following result.

\begin{thm}\label{isomtheo} Let $M$ be a native Mackey functor for $G$ over $k$. Then there is an isomorphism of biset functors
\[
\ind_\mu^\biset\, M \cong \ind_\rho^\biset\, \inf^\rho_\Omega \, \defl^\tau_\Omega \, \res^\mu_\tau \, M.
\]
\end{thm}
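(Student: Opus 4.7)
The proof should be essentially a one-line consequence of the equivalence of categories established at the end of Section 3, together with transitivity of induction along the chain of subalgebras $\rho \subset \mu \subset \Gamma$. The plan is to start from the isomorphism \eqref{eqn:twin}, namely
\[
M \cong \ind_{\rho}^{\mu}\inf_{\Omega}^{\rho}\defl_{\Omega}^{\tau}\res^{\mu}_{\tau} M,
\]
which holds for every native Mackey functor $M$ because the semisimplicity of $\mu_k(G)$ reduces the verification to the simple functors $S_{H,V}^\mu$, where it follows directly from the characterization given just before the statement of the theorem.

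From there, I would apply the induction functor $\ind_\mu^\Gamma$ to both sides of this isomorphism, obtaining
\[
\ind_\mu^\Gamma M \;\cong\; \ind_\mu^\Gamma \ind_{\rho}^{\mu}\inf_{\Omega}^{\rho}\defl_{\Omega}^{\tau}\res^{\mu}_{\tau} M.
\]
Since $\rho$ is a subalgebra of $\mu$ which is in turn a subalgebra of $\Gamma$, transitivity of induction (equivalently, the associativity of tensor product: $\Gamma\otimes_\mu(\mu\otimes_\rho -)\cong \Gamma\otimes_\rho -$) yields the natural isomorphism $\ind_\mu^\Gamma\ind_\rho^\mu \cong \ind_\rho^\Gamma$. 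Substituting this into the display above produces exactly the desired formula
\[
\ind_\mu^\Gamma M \;\cong\; \ind_\rho^\Gamma\inf^\rho_\Omega\defl^\tau_\Omega\res^\mu_\tau M.
\]

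There is really no obstacle of any depth here: all the substantive work has been done in Section 3, where the triangle of subalgebras together with Theorem \ref{thm:native-equivs} was used to establish the equivalence of categories between ${}_\Omega\mod$ and ${}_\mu\mod$ and hence the isomorphism \eqref{eqn:twin}. The only mild subtlety worth flagging in the write-up is to record explicitly that the transitivity $\ind_\mu^\Gamma\ind_\rho^\mu\cong\ind_\rho^\Gamma$ is a natural isomorphism of functors (not just of evaluations), so that substituting into the previous display gives a genuine isomorphism of biset functors. Once this is noted, the theorem follows in two short steps.
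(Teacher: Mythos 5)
Your proof is correct and takes essentially the same approach as the paper: both proofs reduce the theorem to the isomorphism $M \cong \ind_{\rho}^{\mu}\inf_{\Omega}^{\rho}\defl_{\Omega}^{\tau}\res^{\mu}_{\tau} M$ established at the end of Section 3 and then invoke transitivity of induction along $\rho \subset \mu \subset \Gamma$; you simply present the argument in the forward direction (apply $\ind_\mu^\Gamma$ and simplify) while the paper presents it in the reverse direction (rewrite the right-hand side and reduce).
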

\begin{proof} Note that since the algebra $\rho$ is a subalgebra of the native Mackey algebra $\mu$, we can rewrite the right hand side
of the above isomorphism as
\[
\ind_\rho^\biset\, \inf^\rho_\Omega \, \defl^\tau_\Omega \, \res^\mu_\tau \, M =
\ind_\mu^\biset\, \ind_\rho^\mu\, \inf^\rho_\Omega \, \defl^\tau_\Omega \, \res^\mu_\tau \, M.
\]
Therefore it suffices to show that there is an isomorphism of native Mackey functors
\[
M \cong \ind_\rho^\mu\, \inf^\rho_\Omega \, \defl^\tau_\Omega \, \res^\mu_\tau \, M
\]
which follows from the category equivalence explained at the end of the previous section.
\end{proof}

Now since the native Mackey algebra $\mu_k(G)$ is semisimple and the induction functor preserves projectivity, we obtain the
following corollary on projectivity of the induced functors.

\begin{cor}\label{cor1proj}
Let $M$ be a native Mackey functor for $G$ over $k$. Then the biset functor $\ind_\mu^\biset\, M$ is projective.
\end{cor}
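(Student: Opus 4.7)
The plan is to deduce this corollary by a short two-step argument that uses only the semisimplicity of $\mu_k(G)$ and the formal properties of induction of modules along a subalgebra inclusion. The explicit formula of Theorem \ref{isomtheo} is not actually needed for this particular statement, though it is the natural companion result.

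First, I would invoke the semisimplicity theorem stated in Section \ref{sec:native}: because $k$ is algebraically closed of characteristic zero, the native Mackey algebra $\mu_k(G)$ is semisimple. Every module over a semisimple algebra is projective, so the given native Mackey functor $M$ is a projective $\mu_k(G)$-module. Concretely, $M$ is a direct summand of some finite direct sum of copies of $\mu_k(G)$ itself, i.e.\ of a free $\mu_k(G)$-module of finite rank.

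Next I would observe that the induction functor $\ind_\mu^\biset = \biset(G) \otimes_{\mu(G)} -$ preserves projectivity. This is formal: it is the left adjoint of the restriction-of-scalars functor $\res^\biset_\mu$ along the algebra inclusion $\mu(G) \hookrightarrow \biset(G)$, and this restriction functor is exact, so its left adjoint carries projectives to projectives. Equivalently, and perhaps more concretely, induction commutes with direct sums and sends $\mu(G)$ to $\biset(G)$, so if $M$ is a direct summand of $\mu(G)^n$ then $\ind_\mu^\biset M$ is a direct summand of $\biset(G)^n$, a free $\biset(G)$-module. Either way, $\ind_\mu^\biset M$ is a projective biset functor, which is the statement to be proved.

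There is essentially no obstacle here; the argument is a formal composition of two standard facts, and the only thing to verify is that the hypotheses cited from the previous section (semisimplicity of $\mu_k(G)$) and the standard homological algebra of induction along a subalgebra are in place, both of which have already been established in the paper.
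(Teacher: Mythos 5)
Your argument is exactly the one the paper uses: the sentence preceding Corollary \ref{cor1proj} gives precisely these two facts (semisimplicity of $\mu_k(G)$, hence $M$ projective, and preservation of projectivity by the induction functor $\ind_\mu^\Gamma$ as a left adjoint of the exact restriction functor). Your added observation that Theorem \ref{isomtheo} is not actually needed here is also correct.
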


\begin{rem} To use the above identification more efficiently, one can consider the induction functor 
$\ind_\rho^\Gamma$ as a composition $\ind_\nabla^\Gamma\ind_\rho^\nabla$ where $\nabla$ is the subalgebra of $\Gamma$
generated by all morphisms of the form $\iso_{K',K/N}^\lambda\des^H_{K/N}$ for all appropriate choices of the subquotients 
$H,K/N$
and $K'$ and isomorphisms $\lambda$. Now the description of $\ind_\nabla^\Gamma$ can be found in \cite{C3} and we include
a description of the other functor in the appendix.
\end{rem}
\section{Projective biset functors}
In this section, we prove that any projective biset functor for $G$ over $k$ is obtained by inducing a virtual native Mackey functor.
We first introduce our notation. Let $\mathcal R_{\mu,k}(G)$ denote the Grothendieck group of the category of native Mackey 
functors for $G$ over $k$ and let $\mathcal P_{\Gamma, k}(G)$ denote the Grothendieck group of the category of projective biset 
functors for 
$G$ over $k$. Note that both of these groups are free of the same rank with basis parameterized by the pairs $(H,V)$ described 
in the previous sections. In the first case, the basis consists of the set of isomorphism classes of simple native Mackey functors
$S_{H,V}^\mu$, whereas, in the second case, the basis consists of the set of isomorphism classes of indecomposable
projective biset functors $P_{H,V}^\Gamma$.

Now we have the following theorem.  
\begin{thm}\label{thm:induction} The induction functor
\[
\ind_{\mu(G)}^{\Gamma(G)}: {}_{\mu}\mod \rightarrow {}_{\Gamma}\mod
\]
induces an isomorphism
\[
\ind_{\mu(G)}^{\Gamma(G)}: \mathbb Q\mathcal R_{\mu, k}(G) \rightarrow \mathbb Q\mathcal P_{\Gamma, k}(G).
\]
of $\mathbb Q$-vector spaces.
\end{thm}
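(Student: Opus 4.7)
The plan is to compute the matrix of the $\mathbb Q$-linear map $\ind_\mu^\Gamma$ with respect to the natural bases of the two Grothendieck groups and show it is triangular with non-zero diagonal entries. Both $\mathbb Q\mathcal R_{\mu,k}(G)$ and $\mathbb Q\mathcal P_{\Gamma,k}(G)$ are finite-dimensional $\mathbb Q$-vector spaces of the same dimension, with bases $\{[S_{H,V}^\mu]\}$ and $\{[P_{H,V}^\Gamma]\}$ both indexed by isomorphism classes of pairs $(H,V)$ with $H\in\sq(G)$ and $V$ a simple $k\out(H)$-module. By Corollary \ref{cor1proj}, the image $\ind_\mu^\Gamma S_{H,V}^\mu$ is projective, so
\[
[\ind_\mu^\Gamma S_{H,V}^\mu] = \sum_{(H',V')} m_{(H,V),(H',V')}\,[P_{H',V'}^\Gamma]
\]
for non-negative integers $m_{(H,V),(H',V')}$; these are the matrix entries I will analyse.

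The multiplicities are computed via the standard identity
\[
m_{(H,V),(H',V')} = \dim_k \hom_\Gamma(\ind_\mu^\Gamma S_{H,V}^\mu,\, S_{H',V'}^\Gamma),
\]
valid because $S_{H',V'}^\Gamma$ is the unique simple quotient of $P_{H',V'}^\Gamma$. By the $(\ind_\mu^\Gamma, \res^\Gamma_\mu)$-adjunction this equals $\dim_k \hom_\mu(S_{H,V}^\mu, \res^\Gamma_\mu S_{H',V'}^\Gamma)$, and since $\mu_k(G)$ is semisimple this is precisely the multiplicity of $S_{H,V}^\mu$ in the semisimple decomposition of the native Mackey functor $\res^\Gamma_\mu S_{H',V'}^\Gamma$.

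Now I would invoke the standard facts from Bouc's theory that $S_{H',V'}^\Gamma(K) = 0$ whenever $H'$ is not a subquotient of $K$, in particular whenever $|K| < |H'|$, together with $S_{H',V'}^\Gamma(H') = V'$ as a $k\out(H')$-module. Consequently the minimal group of $\res^\Gamma_\mu S_{H',V'}^\Gamma$ is $H'$, its value there is the simple $k\out(H')$-module $V'$, and this value contributes exactly one copy of $S_{H',V'}^\mu$ and no other $S_{H,V}^\mu$ with $|H|\leq |H'|$. Therefore $m_{(H,V),(H',V')} = 0$ whenever $|H| < |H'|$, and $m_{(H,V),(H,V)} = 1$. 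Ordering the index set by non-decreasing $|H|$ with arbitrary tie-breaking, the matrix of $\ind_\mu^\Gamma$ is triangular with $1$'s on the diagonal, hence unimodular over $\mathbb Z$ and in particular an isomorphism over $\mathbb Q$.

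The only subtle point is verifying that the minimal-group contribution of $\res^\Gamma_\mu S_{H',V'}^\Gamma$ is $S_{H',V'}^\mu$ with multiplicity one, which reduces to the characterization of the simple biset functors recalled earlier (cf.\ \cite[Theorem 4.3.10]{B96}); the rest of the argument is a routine combination of Frobenius reciprocity and the semisimplicity of $\mu_k(G)$.
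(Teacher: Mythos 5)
Your proof is correct and follows essentially the same strategy as the paper: exhibit the multiplicity matrix $m_{(H,V),(H',V')}$ as triangular with unit diagonal with respect to an ordering by group order, using the fact that $S_{H',V'}^\Gamma$ vanishes below its minimal group $H'$ and equals $V'$ there. The one substantive difference is that you apply the $(\ind_\mu^\Gamma,\res^\Gamma_\mu)$-adjunction directly and invoke the semisimplicity of $\mu_k(G)$ to read off the multiplicity of $S_{H,V}^\mu$ in $\res^\Gamma_\mu S_{H',V'}^\Gamma$, whereas the paper first replaces $S_{H,V}^\mu$ by $\ind_\rho^\mu S_{H,V}^\rho$ and works with $\hom_\rho(S_{H,V}^\rho,\res^\Gamma_\rho S_{H',V'}^\Gamma)$; that route yields only an upper bound off the diagonal, so the paper must separately exhibit the nonzero morphism $\iota^*$ to pin the diagonal entries to~$1$. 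Your version short-circuits that step, at the cost of leaning more directly on the characterization of simple native Mackey functors by minimal group and value, which you do cite. Both approaches are sound and of comparable length.
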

\begin{proof} For simplicity, we write $\Gamma$ and $\mu$, instead of $\Gamma(G)$ and $\mu(G)$.
Evidently it suffices to prove that any projective indecomposable biset functor is a rational combination of induced native Mackey 
functors. For this aim, let $H$ be a subquotient of $G$ and let $V$ be a simple $k\out(H)$-module. Then, by Corollary 
\ref{cor1proj}, the biset functor 
$\ind_\mu^\biset S_{H,V}^\mu$ is projective, so we have
\[
\ind_\mu^\biset\, S_{H,V}^\mu \cong \bigoplus_{(K,W)}n_{K,W}^{H,V}\, P_{K,W}^\biset
\]
where the sum is over the set of isomorphism classes of simple biset functors for $G$ over $k$ and $P_{K,W}^\biset$ denotes the projective cover of
the simple functor $S_{K,W}^\biset$. Since $k$ is algebraically closed, we further have that
\[
n_{K,W}^{H,V} = {\rm dim}_k\,\hom_\biset(\ind_\mu^\biset\, S_{H,V}^\mu, S_{K,W}^\biset).
\]
Now, by Equation \ref{eqn:simple0} and Equation \ref{eqn:simples}, there is an isomorphism of 
native Mackey functors
\begin{eqnarray}\label{eqn:simple-iso}
S_{H,V}^\mu \cong \ind_\rho^\mu\, S_{H,V}^\rho.
\end{eqnarray}
Here the simple $\rho$-module $S_{H,V}^\rho$ is the functor which takes the value zero for all subquotients of $G$ except for 
the isomorphism class of $H$ and $S_{H,V}^\rho(H) = V$. Substituting this into the above equality,
we get
\[
n_{K,W}^{H,V} = {\rm dim}_k\,\hom_\biset(\ind_\rho^\biset S_{H,V}^\rho, S_{K,W}^\biset).
\]
On the other hand, since the induction functor is the left adjoint of the restriction functor, we get
\[
n_{K,W}^{H,V} = {\rm dim}_k\,\hom_\rho(S_{H,V}^\rho, \res^\biset_\rho S_{K,W}^\biset).
\]
Note that, to finish the proof, it is sufficient to show that the matrix
\[
N_G = \big( n_{K,W}^{H,V} \big)_{((K,W),(H,V))}
\]
is invertible. We proceed to show that, with a correct ordering, the matrix $N_G$ is lower triangular with
diagonal entries equal to 1.

Indeed let $\mathcal H := \{ 1= H_0, H_1, \cdots, H_n=G \}$ be a set of representatives of the isomorphism classes of 
subquotients of $G$ ordered in a way that if $i< j$ then $|H_i|\le |H_j|$. Also let
\[
\mathcal S := \{ (H_i,V_{ij})| H_i\in \mathcal H, V_{ij}\in {\rm Irr}(k\out(H_i)) \}.
\]
Then we claim that
\[
n_{H_i,V_{ij}}^{H_k,V_{kl}} = 0
\]
if $i>k$. Indeed, since the $\rho$-module $S_{H_k,V_{kl}}^\rho$ is non-zero only on the isomorphism class of the group $H_k$,
any $\rho$-module homomorphism 
\[
\phi: S_{H_k, V_{kl}}^\rho \to \res^\Gamma_\rho S_{H_i,V_{ij}}^\Gamma
\]
is uniquely determined by its component
\[
\phi_{H_k}: S_{H_k, V_{kl}}^\rho(H_k) \to \res^\Gamma_\rho S_{H_i,V_{ij}}^\Gamma(H_k).
\]
Moreover the map $\phi_{H_k}$ should be a homomorphism of $k\out(H_k)$-modules by its definition. Thus there is an inclusion 
\[
\hom_\rho(S_{H_k, V_{kl}}^\rho, \res^\Gamma_\rho S_{H_i,V_{ij}}^\Gamma)\subseteq \hom_{k\out(H_k)}(V_{kl}, S_{H_i,V_{ij}}^\Gamma(H_k))
\]
of $k$-vector spaces. Therefore we get 
\begin{eqnarray}\label{eqn:ineqn}
n_{H_i,V_{ij}}^{H_k,V_{kl}} = {\rm dim}_k\,\hom_\rho(S_{H_k,V_{kl}}^\rho, \res^\biset_\rho S_{H_i,V_{ij}}^\biset)\le
{\rm dim}_k\,\hom_{k\out(H_k)}(V_{kl}, S_{H_i,V_{ij}}^\biset(H_k)). 
\end{eqnarray}
But $H_i$ is a minimal group for the simple functor $S_{H_i,V_{ij}}^\Gamma$. Thus we have $S_{H_i,V_{ij}}^\Gamma(H_k) = 0$ 
which implies that the multiplicity $n_{H_i,V_{ij}}^{H_k,V_{kl}}$ is also zero, as claimed. 

On the other hand, if $i=k$, then there are two cases. If $j\neq l$, then by the inequality (\ref{eqn:ineqn}), we get 
\begin{eqnarray*}
n_{H_i,V_{ij}}^{H_i,V_{il}} &=& {\rm dim}_k\,\hom_\rho(S_{H_i,V_{il}}^\rho, \res^\biset_\rho S_{H_i,V_{ij}}^\biset)\\
&\le& {\rm dim}_k\,\hom_{k\out(H_i)}(V_{il}, V_{ij}).
\end{eqnarray*}
But since $j\neq l$, the module $V_{ij}$ is not isomorphic to $V_{il}$, so the latter dimension is equal to zero by Schur's Lemma. Finally, 
if $j=l$, then we have
\begin{eqnarray*}
n_{H_i,V_{ij}}^{H_i,V_{ij}} &=& {\rm dim}_k\,\hom_\rho(S_{H_i,V_{ij}}^\rho, \res^\biset_\rho S_{H_i,V_{ij}}^\biset)\\
&\le& {\rm dim}_k\,\hom_{k\out(H_i)}(V_{ij}, V_{ij}).
\end{eqnarray*}
This time, by Schur's Lemma, the later dimension is equal to 1. Now since the identity morphism $\iota:V_{ij}\to V_{ij}$ induces
a non-zero homomorphism 
\[
\iota^*: S_{H_i,V_{ij}}^\rho \to \res^\biset_\rho S_{H_i,V_{ij}}^\biset
\]
of $\rho$-modules, the set $\hom_\rho(S_{H_i,V_{ij}}^\rho, \res^\biset_\rho S_{H_i,V_{ij}}^\biset)$ is non-zero, and hence has 
dimension 1. Therefore, we obtain that
\[
n_{H_i,V_{ij}}^{H_i,V_{il}} = \delta_{jl}
\]
and hence combining this result with the previous case, we conclude that the matrix $$N_G = \big(n_{H_i,V_{ij}}^{H_k,V_{kl}}
\big)_{(H_i,V_{ij}), (H_k,V_{kl})\in \mathcal S}$$ is lower triangular having ones in the diagonal, as required. 
\end{proof}
The following corollary is immediate from the proof of the above theorem.
\begin{cor} Let $H,K\in \sq(G)$ and $V$ be a simple $k\out(H)$-module and $W$ be a simple 
$k\out(K)$-module. Then the multiplicity of the projective indecomposable biset functor $P_{K,W}^\Gamma$ as a direct summand 
in the biset functor $\ind_\mu^\Gamma S_{H,V}^\mu$ is non-zero only if $K$ is a subquotient of $H$.
\end{cor}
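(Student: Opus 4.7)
The plan is to harvest the key computation already carried out inside the proof of Theorem \ref{thm:induction} and then apply the standard vanishing property of simple biset functors. Concretely, the multiplicity $n_{K,W}^{H,V}$ of $P_{K,W}^\Gamma$ in $\ind_\mu^\Gamma S_{H,V}^\mu$ equals $\dim_k \hom_\Gamma(\ind_\mu^\Gamma S_{H,V}^\mu, S_{K,W}^\Gamma)$; combining the isomorphism $S_{H,V}^\mu \cong \ind_\rho^\mu S_{H,V}^\rho$ from (\ref{eqn:simple-iso}) with the $(\ind,\res)$-adjunction for the inclusion $\rho\hookrightarrow \Gamma$ gives
\[
n_{K,W}^{H,V} = \dim_k \hom_\rho(S_{H,V}^\rho, \res^\Gamma_\rho S_{K,W}^\Gamma).
\]

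Next I would repeat the argument from the proof of Theorem \ref{thm:induction} establishing the inequality (\ref{eqn:ineqn}): since $S_{H,V}^\rho$ is concentrated on the isomorphism class of $H$, any $\rho$-module map from $S_{H,V}^\rho$ into $\res^\Gamma_\rho S_{K,W}^\Gamma$ is determined by its component at $H$, which must be a $k\out(H)$-linear map $V\to S_{K,W}^\Gamma(H)$. This yields
\[
n_{K,W}^{H,V} \le \dim_k \hom_{k\out(H)}(V, S_{K,W}^\Gamma(H)).
\]

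Finally, I would invoke the characterization of the simple biset functor $S_{K,W}^\Gamma$: the group $K$ is of minimal order with $S_{K,W}^\Gamma(K)\neq 0$, and more precisely $S_{K,W}^\Gamma(J)=0$ whenever $K$ is not a subquotient of $J$. Contrapositively, if $K\notin \sq(H)$, then $S_{K,W}^\Gamma(H)=0$ and the displayed upper bound forces $n_{K,W}^{H,V}=0$, which is exactly the claim. There is no genuine obstacle to address, since every ingredient has been assembled during the proof of Theorem \ref{thm:induction}; the corollary amounts to reading off the vanishing half of that computation.
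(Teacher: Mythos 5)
Your proposal is correct and reproduces exactly the argument the paper has in mind: the paper states the corollary is ``immediate from the proof of the above theorem,'' and what it is pointing to is precisely the chain you spell out --- $n_{K,W}^{H,V}=\dim_k\hom_\rho(S_{H,V}^\rho,\res^\Gamma_\rho S_{K,W}^\Gamma)\le\dim_k\hom_{k\out(H)}(V,S_{K,W}^\Gamma(H))$, combined with the vanishing of $S_{K,W}^\Gamma(H)$ when $K$ is not a subquotient of $H$. Same route, correctly assembled.
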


In the next section, a better description of this coefficient will be obtained by considering the off-diagonal entries of the above matrix. However, when $H$ is equal to $K$ in the above corollary, we get the following precise result.
\begin{cor}\label{corollary:top} Let $H\in \sq(G)$ and $V, W$ be simple $k\out(H)$-modules. Then the multiplicity of the projective indecomposable functor $P_{H,W}^\Gamma$ as a direct summand in the biset functor $\ind_\mu^\Gamma S_{H,V}^\mu$ is 
non-zero if and only if $V\cong W$. When the coefficient is non-zero, it is equal to 1.
\end{cor}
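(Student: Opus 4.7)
The plan is to observe that this corollary is essentially a clean restatement of the diagonal portion of the matrix computation already carried out in the proof of Theorem \ref{thm:induction}, specialized to the case $K=H$. No new ingredients are needed; I would simply isolate the relevant part of that argument.

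Concretely, I would begin by recalling from the proof of Theorem \ref{thm:induction} the identity
\[
n_{K,W}^{H,V} = \dim_k \hom_\rho(S_{H,V}^\rho, \res^\Gamma_\rho S_{K,W}^\Gamma),
\]
obtained by rewriting $S_{H,V}^\mu \cong \ind_\rho^\mu S_{H,V}^\rho$ via Equation \ref{eqn:simple-iso} and then applying the adjunction between $\ind_\rho^\Gamma$ and $\res^\Gamma_\rho$. Setting $K=H$ and using that $S_{H,V}^\rho$ is supported on the isomorphism class of $H$ with value $V$ as a $k\out(H)$-module, any $\rho$-homomorphism $\phi:S_{H,V}^\rho\to \res^\Gamma_\rho S_{H,W}^\Gamma$ is determined by its $H$-component $\phi_H:V\to S_{H,W}^\Gamma(H)=W$, which must be $k\out(H)$-linear. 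This yields
\[
n_{H,W}^{H,V} \leq \dim_k \hom_{k\out(H)}(V,W),
\]
and by Schur's Lemma together with the fact that $k$ is algebraically closed, the right hand side is $\delta_{V,W}$ (interpreted as $1$ if $V\cong W$ and $0$ otherwise).

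For the matching lower bound when $V\cong W$, I would exhibit the nonzero $\rho$-homomorphism $\iota^*:S_{H,V}^\rho \to \res^\Gamma_\rho S_{H,V}^\Gamma$ induced by the identity $\iota:V\to V$, exactly as displayed in the proof of Theorem \ref{thm:induction}. This forces $n_{H,V}^{H,V}=1$. Combining the two cases gives both the vanishing and the value $1$ asserted in the corollary.

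There is no real obstacle here: the work has already been done in the preceding theorem, and the content of the corollary is just the diagonal case $i=k$ of that argument phrased invariantly. The only care needed is to note that the upper bound coming from the inclusion of Hom spaces is attained in the case $V\cong W$, which is precisely what the explicit map $\iota^*$ establishes.
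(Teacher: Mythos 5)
Your proposal is correct and follows exactly the route the paper intends: the corollary is declared immediate from the proof of Theorem \ref{thm:induction}, and what you have done is precisely extract the diagonal case $i=k$ of that argument (the Hom-space inclusion giving the upper bound $\dim_k\hom_{k\out(H)}(V,W)$, Schur's Lemma over the algebraically closed field $k$, and the explicit map $\iota^*$ furnishing the matching lower bound when $V\cong W$). No new ideas are used, and none are needed.
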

Thus for any simple native Mackey functor $S_{H,V}^\mu$, we have
\[
\ind_\mu^\Gamma S_{H,V}^\mu \cong P_{H,V}^\Gamma \oplus P
\]
where $P$ is a (possibly zero) projective biset functor having no summands of the form $P_{H,W}^\Gamma$.
\section{Off-diagonal entries of $N_G$} 
For the rest of the paper, we study the summand $P$ defined above, in some special cases. The following 
characterization and estimation of the off-diagonal entries will be useful.

In the proof of Theorem \ref{thm:induction}, we have seen the following equality.
\[
n_{K,W}^{H,V} = {\rm dim}_k \hom_\rho(S_{H,V}^\rho, \res^\biset_\rho S_{K,W}^\biset).
\]
Now, as remark in the previous section, we have that $S_{H,V}^\rho = \infl_{\Omega}^\rho S_{H,V}^\Omega$. Therefore, we have
\[
n_{K,W}^{H,V} = {\rm dim}_k \hom_\rho(\infl_\Omega^\rho S_{H,V}^\Omega, \res^\biset_\rho S_{K,W}^\biset).
\]
If we denote the right adjoint of $\infl_\Omega^\rho$ by $\codef^\rho_\Omega$, the above equality becomes
\begin{eqnarray*}
n_{K,W}^{H,V} &=& {\rm dim}_k \hom_\Omega(S_{H,V}^\Omega, \codef^\rho_\Omega\res^\biset_\rho S_{K,W}^\biset)\\
&=& {\rm dim}_k \hom_{k\out(H)}(V, \codef^\rho_\Omega\res^\biset_\rho S_{K,W}^\biset (H)).
\end{eqnarray*}
Here we use the Morita equivalence, described in \cite[Section 4]{C3} of the algebras $\Omega(G)$ and $\prod_H k\out(H)$ 
where the product is over all subquotients of $G$, up to isomorphism.

Since the algebra $\Omega$ is the quotient of the algebra $\rho$ by the ideal generated by all proper restriction bisets, the 
evaluation of $\codef^\rho_\Omega D$ for a $\rho$-module $D$ at a subquotient $K$ of $G$ is given by
\[
\codef^\rho_\Omega D (K) = \bigcap_{L<K} \ker(\res^K_L: D(K) \ra D(L)).
\]
Now the following proposition is immediate since the group algebra $k\out{H}$ is semisimple.

\begin{pro}\label{codefres} The multiplicity $n_{K,W}^{H,V}$ of $P_{K,W}^\Gamma$ in the biset functor 
$\ind_\mu^\Gamma S_{H,V}^\mu$ is equal to the multiplicity of the $k\out(H)$-module $V$ in the $k\out(H)$-module 
$\codef^\rho_\Omega\res^\biset_\rho S_{K,W}^\biset (H)$.
\end{pro}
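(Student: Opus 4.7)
The plan is essentially to complete the chain of equalities already set up in the paragraphs preceding the statement. The starting point is the formula
\[
n_{K,W}^{H,V} = \dim_k \hom_\rho(S_{H,V}^\rho, \res^\Gamma_\rho S_{K,W}^\Gamma)
\]
derived in the proof of Theorem \ref{thm:induction} from Frobenius reciprocity for the $(\ind_\rho^\Gamma, \res_\rho^\Gamma)$-adjunction, together with the isomorphism $S_{H,V}^\mu \cong \ind_\rho^\mu S_{H,V}^\rho$ from (\ref{eqn:simple-iso}).

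First, I would invoke the identification $S_{H,V}^\rho \cong \infl_\Omega^\rho S_{H,V}^\Omega$ of (\ref{eqn:simple0}) to rewrite the left-hand argument as an inflated $\Omega$-module. Second, I would apply the adjunction $(\infl_\Omega^\rho, \codef_\Omega^\rho)$, which is by definition the pair associated to the quotient map $\rho \twoheadrightarrow \Omega$ (the lower-left down-arrow of the triangle in Section \ref{sec:native}), to move the inflation across the Hom:
\[
\hom_\rho(\infl_\Omega^\rho S_{H,V}^\Omega, \res^\Gamma_\rho S_{K,W}^\Gamma)
\;\cong\;
\hom_\Omega(S_{H,V}^\Omega, \codef^\rho_\Omega \res^\Gamma_\rho S_{K,W}^\Gamma).
\]

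Third, I would apply the Morita equivalence between the category of $\Omega(G)$-modules and the category of modules over $\prod_H k\out(H)$ (one factor per isomorphism class of subquotient), recalled in \cite[Section 4]{C3}. Under this equivalence, $S_{H,V}^\Omega$ corresponds to $V$ viewed as a $k\out(H)$-module concentrated in the $H$-component, so $\hom_\Omega$ out of $S_{H,V}^\Omega$ reads off only the $H$-component of the target, giving
\[
\hom_\Omega(S_{H,V}^\Omega, \codef^\rho_\Omega \res^\Gamma_\rho S_{K,W}^\Gamma) \;\cong\; \hom_{k\out(H)}\bigl(V,\, (\codef^\rho_\Omega \res^\Gamma_\rho S_{K,W}^\Gamma)(H)\bigr).
\]

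Finally, since $k$ has characteristic zero and $\out(H)$ is finite, the group algebra $k\out(H)$ is semisimple, so for the simple module $V$ the dimension $\dim_k \hom_{k\out(H)}(V, M)$ is exactly the multiplicity of $V$ as a composition factor (equivalently, as a direct summand) of any $k\out(H)$-module $M$. Applying this to $M = \codef^\rho_\Omega \res^\Gamma_\rho S_{K,W}^\Gamma(H)$ yields the proposition. The only step that requires a bit of care, and which I would expect to be the main obstacle if I had not already quoted \cite{C3}, is verifying the Morita equivalence and the precise form of the codeflation functor in the third step; everything else is a formal manipulation of adjunctions.
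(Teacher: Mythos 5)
Your proof is correct and follows essentially the same route as the paper's: start from $n_{K,W}^{H,V} = \dim_k \hom_\rho(S_{H,V}^\rho, \res^\Gamma_\rho S_{K,W}^\Gamma)$, rewrite $S_{H,V}^\rho$ as $\infl_\Omega^\rho S_{H,V}^\Omega$, apply the $(\infl_\Omega^\rho, \codef_\Omega^\rho)$ adjunction, invoke the Morita equivalence between $\Omega(G)$ and $\prod_H k\out(H)$ from \cite{C3}, and finish with semisimplicity of $k\out(H)$. There is no meaningful divergence from the argument in the paper.
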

 
Note that, in general, the evaluations of simple biset functors are not easy to determine. Therefore, the above 
numbers are not easy to calculate. Next, we derive an upper bound for this number which will be useful to determine if the 
number is zero or not.

To start with, note that
\[
n_{K,W}^{H,V}= \dim_k\hom_\Gamma(\ind_\mu^\Gamma S_{H,V}^\mu, S_{K,W}^\Gamma)= {\rm dim}_k 
\hom_\Gamma(\ind_\rho^\Gamma S_{H,V}^\rho, S_{K,W}^\biset) 
\]
by the isomorphism (\ref{eqn:simple-iso}) of the previous section. On the other hand,  by \cite[Corollary 4.7]{C3}, the simple functor 
$S_{K,W}^\Gamma$ is the unique minimal subfunctor of the functor 
$\coind_\Delta^\Gamma S_{K,W}^\Delta$. Here $\Delta$ denotes the subalgebra of $\Gamma$ generated by all bisets of the 
form $\tin_{L/N}^H$ and $\iso_{H,H'}^\lambda$. Thus any morphism of $\Gamma$-modules $\ind_\rho^\Gamma S_{H,V}^\rho
\ra S_{K,W}^\Gamma$ can be regarded as a morphism $\ind_\rho^\Gamma S_{H,V}^\rho \ra \coind_{\Delta}^\Gamma 
S_{K,W}^\Delta$.Thus we get 
\[
n_{K,W}^{H,V} \le \dim_k\hom_\Gamma(\ind_\rho^\Gamma S_{H,V}^\rho, \coind_\Delta^\Gamma S_{K,W}^\Delta).
\]
The right hand side of the above inequality can be calculated as follows.
\begin{eqnarray*}
\hom_\Gamma(\ind_\rho^\Gamma S_{H,V}^\rho, \coind_\Delta^\Gamma S_{K,W}^\Delta) &\cong& 
\hom_\nabla(\ind_\rho^\nabla S_{H,V}^\rho, \res^\Gamma_\nabla\coind_\Delta^\Gamma S_{K,W}^\Delta) \\
&\cong& \hom_\nabla(\ind_\rho^\nabla S_{H,V}^\rho, \coind_\Omega^\nabla\res^\Delta_\Omega S_{K,W}^\Delta)\\
&\cong& \hom_\Omega(\res^\nabla_\Omega\ind_\rho^\Gamma S_{H,V}^\rho, \res^\Delta_\Omega S_{K,W}^\Delta).
\end{eqnarray*}

Here we use Theorem \ref{thm:alchemic-equivs} to obtain the second line, and then use the well-known adjointness properties of 
restriction and induction-coinduction functors to get the last line. 

Now since $S_{K,W}^\Delta$ is non-zero only on the isomorphism class of $K$, the last term in the above isomorphisms gives
the following isomorphism.
\begin{eqnarray*}
\hom_\Gamma(\ind_\rho^\Gamma S_{H,V}^\rho, \coind_\Delta^\Gamma S_{K,W}^\Delta) 
&\cong& \hom_{k\out(K)}(\ind_\rho^\nabla S_{H,V}^\rho (K), W).
\end{eqnarray*}

Thus the coefficient $n_{K,W}^{H,V}$ is non-zero only if $W$ appears in the $k\out(K)$-module 
$\ind_\rho^\nabla S_{H,V}^\rho (K)$ as a direct summand. To determine this number, we need a description of the induction functor 
$\ind_\rho^\nabla$ which is given in the appendix. Now using the description in Theorem \ref{pro:rhoTonabla}, we have
\begin{eqnarray*}
\hom_\Gamma(\ind_\rho^\Gamma S_{H,V}^\rho, \coind_\Delta^\Gamma S_{K,W}^\Delta)\cong&& \\
\bigoplus_{R\in{\tiny\sq}(G): R\cong H}\hom_{k\out(K)}(kX(K,R) \otimes_{k\out(R)} V(R), W)
\end{eqnarray*}
where $V(R) = S_{H,V}^\rho(R)$ and $X(K,R)$ is basically the set of isomorphisms from quotients of $R$ to $K$, see the 
appendix for the precise definition. As a result, we have the following proposition.
\begin{pro} Assume the above notation. Then the projective indecomposable biset functor 
$P_{K,W}^\Gamma$ occurs as a direct summand of $\ind_\mu^\Gamma S_{H,V}^\mu$ only if $W$ occurs as a direct summand 
of the $k\out(K)$-module $kX(K,R)\otimes_{k\out(R)}V(R)$ for some subquotient $R$ of $G$ isomorphic to $H$.
\end{pro}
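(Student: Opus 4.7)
The proposition essentially collects what has been derived in the display equations immediately preceding it, so my plan is simply to make the final implication explicit. The starting point is the already-established upper bound
\[
n_{K,W}^{H,V}\le \dim_k\hom_\Gamma(\ind_\rho^\Gamma S_{H,V}^\rho,\coind_\Delta^\Gamma S_{K,W}^\Delta),
\]
which comes from embedding $S_{K,W}^\Gamma$ as the unique minimal subfunctor of $\coind_\Delta^\Gamma S_{K,W}^\Delta$ via \cite[Corollary 4.7]{C3}. Non-vanishing of $n_{K,W}^{H,V}$ therefore forces the Hom space on the right to be non-zero, and the task reduces to extracting from this the asserted constraint on $W$.

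The main step is to reduce this Hom space to a calculation at the single group $K$. Concretely, I would restrict to $\nabla$, apply Theorem \ref{thm:alchemic-equivs}(2) to identify $\res^\Gamma_\nabla\coind_\Delta^\Gamma S_{K,W}^\Delta\cong \coind_\Omega^\nabla\res^\Delta_\Omega S_{K,W}^\Delta$, and then use the standard induction-restriction adjointness together with the concentration of $\res^\Delta_\Omega S_{K,W}^\Delta$ at the isomorphism class of $K$ with value $W$. This yields
\[
\hom_\Gamma(\ind_\rho^\Gamma S_{H,V}^\rho,\coind_\Delta^\Gamma S_{K,W}^\Delta)\cong \hom_{k\out(K)}(\ind_\rho^\nabla S_{H,V}^\rho(K),W).
\]
Plugging in the explicit description of $\ind_\rho^\nabla S_{H,V}^\rho(K)$ supplied by Theorem \ref{pro:rhoTonabla} in the appendix, namely the direct sum $\bigoplus_{R\cong H}kX(K,R)\otimes_{k\out(R)}V(R)$ over $R\in\sq(G)$, and using semisimplicity of $k\out(K)$, non-vanishing forces the simple module $W$ to appear as a direct summand in one of the tensor products $kX(K,R)\otimes_{k\out(R)}V(R)$ with $R\cong H$, which is exactly the stated conclusion.

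The main obstacle is not the argument above, which is a formal adjointness chase built on ingredients already set up in the paper, but rather the description of $\ind_\rho^\nabla$ recorded in Theorem \ref{pro:rhoTonabla}: its verification in the appendix requires a careful analysis of $\nabla\otimes_\rho -$ at the level of bisets. Once that identification is granted, together with the minimality of $S_{K,W}^\Gamma$ inside $\coind_\Delta^\Gamma S_{K,W}^\Delta$, the present proposition follows as a direct corollary, and no further computation is needed.
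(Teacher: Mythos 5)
Your argument is correct and follows the paper's own route exactly: bound $n_{K,W}^{H,V}$ by $\dim_k\hom_\Gamma(\ind_\rho^\Gamma S_{H,V}^\rho,\coind_\Delta^\Gamma S_{K,W}^\Delta)$ via the minimal-subfunctor property from \cite[Corollary 4.7]{C3}, reduce to $\hom_{k\out(K)}(\ind_\rho^\nabla S_{H,V}^\rho(K),W)$ using Theorem \ref{thm:alchemic-equivs}(2) plus induction--restriction adjointness and the concentration of $S_{K,W}^\Delta$ at $K$, and then read off the conclusion from Theorem \ref{pro:rhoTonabla} and semisimplicity of $k\out(K)$. This matches the chain of display equations preceding the proposition in the paper.
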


In particular, we have the following corollary.
\begin{cor}\label{quotient} Assume the above notation. Then the projective indecomposable biset functor 
$P_{K,W}^\Gamma$ occurs as a direct summand of $\ind_\mu^\Gamma S_{H,V}^\mu$ only if there is a normal subgroup $N$
of $H$ such that $K\cong H/N$.
\end{cor}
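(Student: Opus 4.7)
The plan is to derive this directly from the preceding proposition together with the structural description of $X(K,R)$. By that proposition, the assumption that $P_{K,W}^\Gamma$ is a direct summand of $\ind_\mu^\Gamma S_{H,V}^\mu$ forces $W$ to occur as a summand of the $k\out(K)$-module $kX(K,R)\otimes_{k\out(R)}V(R)$ for some subquotient $R$ of $G$ with $R\cong H$. In particular, this tensor product must be non-zero, which in turn forces the underlying set $X(K,R)$ to be non-empty.

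Next I would unpack what non-emptiness of $X(K,R)$ means. As indicated in the excerpt (and made precise in the appendix referenced via Theorem \ref{pro:rhoTonabla}), $X(K,R)$ consists of isomorphisms from quotients of $R$ to $K$. Hence $X(K,R)\neq \emptyset$ exactly when $K$ is isomorphic to some quotient $R/N'$ of $R$, i.e.\ there exists a normal subgroup $N'\unlhd R$ with $K\cong R/N'$.

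Finally, I would transport this information from $R$ back to $H$. Since $R\cong H$, any isomorphism $\varphi: R\to H$ sends the normal subgroup $N'\unlhd R$ to a normal subgroup $N:=\varphi(N')\unlhd H$, and then
\[
K\;\cong\; R/N'\;\cong\; H/N,
\]
which is the conclusion sought. The argument is essentially a one-step unwinding of the previous proposition, so there is no real obstacle beyond correctly identifying what $X(K,R)\neq\emptyset$ says about the relationship between $K$ and $R$; the only point requiring care is that the transfer from $R$ to $H$ uses only that $R$ and $H$ are isomorphic, so no hypothesis on how $R$ sits inside $G$ is needed.
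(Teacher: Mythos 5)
Your proposal is correct and takes essentially the same route as the paper, which states the corollary as an immediate "in particular" consequence of the preceding proposition: since $kX(K,R)\otimes_{k\out(R)}V(R)$ must be non-zero, the set $X(K,R)=\bigsqcup_{N\unlhd R:\,R/N\cong K}K\backslash\iso(R/N,K)/R$ must be non-empty, so $K$ is a quotient of $R\cong H$, and transporting the relevant normal subgroup along any isomorphism $R\to H$ gives the conclusion. You have simply spelled out the unwinding that the paper leaves implicit.
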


\section{Induced functors for cyclic groups}
In this section, we describe the biset functor $\ind_\mu^\Gamma S_{C,V}^\mu$ where $C$ is a cyclic subquotient of $G$. First, by 
Corollary \ref{quotient}, any summand of this functor is determined by a quotient of $C$. Thus we can write
\begin{equation}\label{eqn:cyclic}
\ind_\mu^\Gamma S_{C,V}^\mu = \bigoplus_{(L,W)} n_{L,W}^{C,V} P_{L,W}^\Gamma
\end{equation}
where the sum is over all pairs $(L,W)$ such that $L$ is a quotient of $C$ and $W$ is a simple $k\out(L)$-module. Moreover, by
Proposition \ref{codefres}, to evaluate the above coefficients, we do not need the evaluations of the involved functors at groups
larger than $C$. Thus we can only consider the evaluations at subquotients of $C$.

On the other hand, for any pair $(H,V)$, we have the following isomorphism
\[
\res^{\Gamma(G)}_{\Gamma(H)}\ind_{\mu(G)}^{\Gamma(G)} S_{H,V}^{\mu(G)} \cong
\ind_{\mu(H)}^{\Gamma(H)}\res^{\mu(G)}_{\mu(H)} S_{H,V}^{\mu(G)}
\]
of biset functors for $H$. Indeed, by Theorem \ref{isomtheo}, we have
\[
\ind_{\mu(G)}^{\Gamma(G)} S_{H,V}^{\mu(G)} \cong \ind_{\rho(G)}^{\Gamma(G)} S_{H,V}^{\rho(G)}.
\]
Now denote the identity element of $\Gamma(H)$ by $1_H$ and use the definition of the restriction functor to write
\[
\res^{\Gamma(G)}_{\Gamma(H)}\ind_{\mu(G)}^{\Gamma(G)} S_{H,V}^{\mu(G)} \cong
\res^{\Gamma(G)}_{\Gamma(H)}\ind_{\rho(G)}^{\Gamma(G)} S_{H,V}^{\rho(G)} =1_H\ind_{\rho(G)}^{\Gamma(G)} S_{H,V}^{\rho(G)}
= 1_H\Gamma(G)\otimes_{\rho(G)} S_{H,V}^{\rho(G)}.
\]
Then since $1_H$ acts on the simple functor $S_{H,V}^{\rho(G)}$ trivially, we further have
\begin{eqnarray*}
1_H\Gamma(G)\otimes_{\rho(G)} S_{H,V}^{\rho(G)} &=& 1_H\Gamma(G)\otimes_{\rho(G)} 1_H S_{H,V}^{\rho(G)} \\
&=& 1_H\Gamma(G)1_H \otimes_{\rho(G)} 1_HS_{H,V}^{\rho(G)}\\
&=& \Gamma(H)\otimes_{\rho(G)} 1_HS_{H,V}^{\rho(G)}
\end{eqnarray*}
where the equality $\Gamma(H) = 1_H\Gamma(G) 1_H$ holds by the definition of the alchemic algebra $\Gamma(H)$. In the above
tensor product, we can change $\rho(G)$ by $\rho(H)$ since the elements of the algebra $\rho(G)$ which are not contained in 
$\rho(H)$ are redundant in the tensor product. Thus we get
\begin{eqnarray*}
\res^{\Gamma(G)}_{\Gamma(H)}\ind_{\mu(G)}^{\Gamma(G)} S_{H,V}^\mu &\cong& \Gamma(H) \otimes_{\rho(H)} 1_H 
S_{H,V}^{\rho(H)}\\
&=& \ind_{\rho(H)}^{\Gamma(H)}\res^{\rho(G)}_{\rho(H)} S_{H,V}^{\rho(G)}. 
\end{eqnarray*}
Moreover since $S_{H,V}^{\rho(H)} = \res^{\rho(G)}_{\rho(H)} S_{H,V}^{\rho(G)}$ holds by the definition of this simple functor, we get
\begin{eqnarray*}
\res^{\Gamma(G)}_{\Gamma(H)}\ind_{\mu(G)}^{\Gamma(G)} S_{H,V}^\mu &\cong& \ind_{\rho(H)}^{\Gamma(H)}S_{H,V}^{\rho(H)}\\
&\cong& \ind_{\mu(H)}^{\Gamma(H)} S_{H,V}^{\mu(H)}.
\end{eqnarray*}
Finally, by \cite[Proposition 4.2.2]{B96}, we have $S_{H,V}^{\mu(H)} = \res^{\mu(G)}_{\mu(H)} S_{H,V}^{\mu(G)}$, which completes 
the proof of the following lemma.
\begin{lem}
Let $G$ be a finite group and $H$ be a subquotient of $G$. Then for any simple $k\out(H)$-module $V$, there is an isomorphism
\[
\res^{\Gamma(G)}_{\Gamma(H)}\ind_{\mu(G)}^{\Gamma(G)} S_{H,V}^{\mu(G)} \cong
\ind_{\mu(H)}^{\Gamma(H)}\res^{\mu(G)}_{\mu(H)} S_{H,V}^{\mu(G)}
\]
of biset functors for $H$.
\end{lem}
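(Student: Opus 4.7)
My plan is to reduce the left-hand side by alternately applying the description of the induction functor from Theorem \ref{isomtheo} and the idempotent structure of the alchemic algebra. First, I would use Theorem \ref{isomtheo} to replace $\ind_{\mu(G)}^{\Gamma(G)} S_{H,V}^{\mu(G)}$ with $\ind_{\rho(G)}^{\Gamma(G)} S_{H,V}^{\rho(G)}$, so that both sides are expressed via induction from the restriction subalgebra. This is the right reformulation because $\rho$-modules are easier to control: the simple functor $S_{H,V}^{\rho(G)}$ is concentrated on (the isomorphism class of) $H$, which is precisely the feature that allows the restriction map to commute with induction in this setup.

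Next, I would exploit the fact that restriction of a $\Gamma(G)$-module to $\Gamma(H)$ is obtained by left multiplication by the sum $1_H = \sum_{K \in \sq(H)} \iso_{K,K}^1$, which is the identity of the subalgebra $\Gamma(H) \subseteq \Gamma(G)$. Writing
\[
\res^{\Gamma(G)}_{\Gamma(H)}\bigl(\Gamma(G)\otimes_{\rho(G)} S_{H,V}^{\rho(G)}\bigr) = 1_H\Gamma(G)\otimes_{\rho(G)} S_{H,V}^{\rho(G)},
\]
I would insert a second copy of $1_H$ between the two factors using the fact that $1_H$ fixes $S_{H,V}^{\rho(G)}$ (since its support is contained in $\sq(H)$), yielding $1_H\Gamma(G)1_H \otimes_{\rho(G)} 1_H S_{H,V}^{\rho(G)}$. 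The standard identification $\Gamma(H) = 1_H\Gamma(G)1_H$ converts the first factor into $\Gamma(H)$.

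The main step I expect will need care is the reduction of the scalar ring from $\rho(G)$ to $\rho(H)$ in the tensor product. The idea is that any generator of $\rho(G)$ not already in $\rho(H)$ involves a subquotient outside $\sq(H)$ and is therefore killed after multiplying on both sides by $1_H$; hence the action factors through $\rho(H)$ and the tensor product over $\rho(G)$ coincides with the tensor product over $\rho(H)$. This gives
\[
\res^{\Gamma(G)}_{\Gamma(H)}\ind_{\rho(G)}^{\Gamma(G)} S_{H,V}^{\rho(G)} \cong \Gamma(H) \otimes_{\rho(H)} \res^{\rho(G)}_{\rho(H)} S_{H,V}^{\rho(G)} = \ind_{\rho(H)}^{\Gamma(H)} \res^{\rho(G)}_{\rho(H)} S_{H,V}^{\rho(G)}.
\]

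Finally, I would observe that $\res^{\rho(G)}_{\rho(H)} S_{H,V}^{\rho(G)} = S_{H,V}^{\rho(H)}$ directly from the definition of these simples (both are supported on the isomorphism class of $H$ and equal $V$ there), and then reapply Theorem \ref{isomtheo} for the group $H$ (together with \cite[Proposition 4.2.2]{B96} to identify $S_{H,V}^{\mu(H)}$ with $\res^{\mu(G)}_{\mu(H)} S_{H,V}^{\mu(G)}$) to recover $\ind_{\mu(H)}^{\Gamma(H)} \res^{\mu(G)}_{\mu(H)} S_{H,V}^{\mu(G)}$ on the right, completing the argument.
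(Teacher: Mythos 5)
Your proposal is correct and follows essentially the same line of argument as the paper: rewrite via Theorem \ref{isomtheo}, restrict by multiplying with the idempotent $1_H$, insert a second $1_H$ using that $1_H$ fixes $S_{H,V}^{\rho(G)}$, descend the tensor product from $\rho(G)$ to $\rho(H)$, identify the simples across subalgebras, and close with \cite[Proposition 4.2.2]{B96}. There is no material difference from the paper's proof.
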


In particular, if we apply the restriction functor $\res^{\Gamma(G)}_{\Gamma(C)}$ to Equation \ref{eqn:cyclic} and use the above
lemma, we get
\begin{equation}\label{eqn:7}
 \bigoplus_{(L,W)} n_{L,W}^{C,V} \res^{\Gamma(G)}_{\Gamma(C)}P_{L,W}^\Gamma = 
 \res^{\Gamma(G)}_{\Gamma(C)}\ind_\mu^\Gamma S_{C,V}^\mu =
  \ind^{\Gamma(C)}_{\mu(C)}\res_{\mu(C)}^{\mu(G)} S_{C,V}^\mu 
\end{equation}

On the other hand, by \cite[Proposition 3.3]{W}, we have
\[
P_{L,W}^{\Gamma(C)} \cong \res^{\Gamma(G)}_{\Gamma(C)} P_{L,W}^\Gamma.
\]
Furthermore, since, by \cite{Ba} and \cite{B96}, the alchemic algebra $\Gamma(C)$ is semisimple, we have 
$P_{L,W}^{\Gamma(C)} = S_{L,W}^{\Gamma(C)}$ for any pair $(L,W)$. Finally, by \cite[Proposition 4.2.2]{B96}, we have 
$\res^{\mu(G)}_{\mu(C)}S_{C,V}^{\mu} = S_{C,V}^{\mu(C)}$. 
Thus Equation \ref{eqn:7} becomes
\begin{equation}
  \ind^{\Gamma(C)}_{\mu(C)}S_{C,V}^{\mu(C)} 
= \bigoplus_{(L,W)} n_{L,W}^{C,V} S_{L,W}^{\Gamma(C)} 
\end{equation}
where the coefficients $n_{L,W}^{C,V}$ are the ones determined by Equation \ref{eqn:cyclic}. However, these coefficients are 
also given by
\[
n_{L,W}^{C,V} = \dim_k\hom_{\Gamma(C)}(\ind_{\mu(C)}^{\Gamma(C)} S_{C,V}^{\mu(C)}, S_{L,W}^{\Gamma(C)}).
\]
To evaluate the right hand side, note that by \cite[Theorem 6.7]{C3}, there is an isomorphism
\[
\ind_{\nabla(C)}^{\Gamma(C)} S_{L,W}^{\nabla(C)} \cong \coind_{\Delta(C)}^{\Gamma(C)} S_{L,W}^{\Delta(C)}
\]
of biset functors, which implies, by \cite[Proposition 6.2]{C3} that there is an isomorphism 
\[
S_{L,W}^{\Gamma(C)} \cong  \coind_{\Delta(C)}^{\Gamma(C)} S_{L,W}^{\Delta(C)}
\]
of biset functors. Therefore, we obtain
\[
n_{L,W}^{C,V} = \dim_k\hom_{\Gamma(C)}(\ind_{\mu(C)}^{\Gamma(C)} S_{C,V}^{\mu(C)}, \coind_{\Delta(C)}^{\Gamma(C)}
 S_{L,W}^{\Delta(C)}).
\]
Now using adjointness properties of the restriction and induction-coinduction functors together with the equivalences given in 
Theorem \ref{thm:alchemic-equivs}, we can evaluate the right hand side of the above equality as follows. 
\begin{eqnarray*}
\hom_{\Gamma(C)}(\ind_{\mu(C)}^{\Gamma(C)} S_{C,V}^{\mu(C)}, \coind_{\Delta(C)}^{\Gamma(C)} S_{L,W}^{\Delta(C)}) &\cong& 
\hom_{\Gamma(C)}(\ind_{\rho(C)}^{\Gamma(C)} S_{C,V}^{\rho(C)}, \coind_{\Delta(C)}^{\Gamma(C)} S_{L,W}^{\Delta(C)}) \\ &\cong&
\hom_{\nabla(C)}(\ind_{\rho(C)}^{\nabla(C)} S_{C,V}^{\rho(C)}, \res^{\Gamma(C)}_{\nabla(C)}\coind_{\Delta(C)}^{\Gamma(C)} S_{L,W}^{\Delta(C)})\\ &\cong&
\hom_{\nabla(C)}(\ind_{\rho(C)}^{\nabla(C)} S_{C,V}^{\rho(C)}, \coind_{\Omega(C)}^{\nabla(C)}\res^{\Delta(C)}_{\Omega(C)} S_{L,W}^{\Delta(C)}) \\ &\cong&
\hom_{\Omega(C)}(\res_{\Omega(C)}^{\nabla(C)}\ind_{\rho(C)}^{\nabla(C)} S_{C,V}^{\rho(C)}, \res^{\Delta(C)}_{\Omega(C)} S_{L,W}^{\Delta(C)})\\ &\cong&
\hom_{\Omega(C)}(\res_{\Omega(C)}^{\nabla(C)}\ind_{\rho(C)}^{\nabla(C)} S_{C,V}^{\rho(C)}, S_{L,W}^{\Omega(C)})\\ &\cong&
\hom_{k\out(L)}(\ind_{\rho(C)}^{\nabla(C)} S_{C,V}^{\rho(C)} (L), W).
\end{eqnarray*}
With the above isomorphisms, we have proved the following theorem.
\begin{thm}
Let $G$ be a finite group and $C$ be a cyclic subquotient of $G$. Let $V$ be a simple $k\out(C)$-module. Then there is an 
isomorphism 
\[
\ind_\mu^\Gamma S_{C,V}^\mu \cong \bigoplus_{L,W} n_{L,W}^{C,V} P_{L,W}^\Gamma.
\]
of biset functors where $n_{L,W}^{C,V}$ is the multiplicity of $W$ in the $k\out(L)$-module $\ind_{\rho}^\nabla S_{C,V}^\rho (L)$.
\end{thm}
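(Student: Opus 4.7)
The plan is to assemble the reductions already carried out in this section. By Corollary \ref{quotient}, the induced functor decomposes as
\[
\ind_\mu^\Gamma S_{C,V}^\mu \cong \bigoplus_{(L,W)} n_{L,W}^{C,V} P_{L,W}^\Gamma
\]
where the sum runs over pairs $(L,W)$ with $L$ a quotient of $C$ and $W$ a simple $k\out(L)$-module, so it remains to identify the multiplicities $n_{L,W}^{C,V}$.

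First I would restrict both sides along $\res^{\Gamma(G)}_{\Gamma(C)}$. The lemma immediately preceding the theorem rewrites the left-hand side as $\ind_{\mu(C)}^{\Gamma(C)}\res^{\mu(G)}_{\mu(C)} S_{C,V}^\mu$, which collapses to $\ind_{\mu(C)}^{\Gamma(C)} S_{C,V}^{\mu(C)}$ via \cite[Proposition 4.2.2]{B96}. On the right-hand side, \cite[Proposition 3.3]{W} supplies $\res^{\Gamma(G)}_{\Gamma(C)} P_{L,W}^\Gamma \cong P_{L,W}^{\Gamma(C)}$, and since $C$ is cyclic the alchemic algebra $\Gamma(C)$ is semisimple by \cite{Ba} and \cite{B96}, so $P_{L,W}^{\Gamma(C)} = S_{L,W}^{\Gamma(C)}$. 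Hence $n_{L,W}^{C,V}$ equals $\dim_k \hom_{\Gamma(C)}\bigl(\ind_{\mu(C)}^{\Gamma(C)} S_{C,V}^{\mu(C)},\, S_{L,W}^{\Gamma(C)}\bigr)$.

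Next I would rewrite the second argument through the isomorphism $S_{L,W}^{\Gamma(C)} \cong \coind_{\Delta(C)}^{\Gamma(C)} S_{L,W}^{\Delta(C)}$ coming from \cite[Theorem 6.7 and Proposition 6.2]{C3}, and replace the first argument by $\ind_{\rho(C)}^{\Gamma(C)} S_{C,V}^{\rho(C)}$ via Theorem \ref{isomtheo} applied over $C$. The resulting $\hom_{\Gamma(C)}$ space is then processed by the adjointness of $(\ind,\res)$ and $(\res,\coind)$ pairs together with Theorem \ref{thm:alchemic-equivs}(ii), following the very same chain that was displayed in the preceding off-diagonal computation, to produce
\[
n_{L,W}^{C,V} = \dim_k \hom_{k\out(L)}\bigl(\ind_\rho^\nabla S_{C,V}^\rho(L),\, W\bigr)
\]
after identifying $\Omega(C)$-modules with modules over $\prod_H k\out(H)$ through the Morita equivalence of \cite[Section 4]{C3}.

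The only place where cyclicity of $C$ is genuinely used is the appeal to semisimplicity of $\Gamma(C)$ that identifies $P_{L,W}^{\Gamma(C)}$ with $S_{L,W}^{\Gamma(C)}$; beyond that, everything is a reassembly of adjunction isomorphisms already established, so no substantially new obstacle arises beyond bookkeeping of the functor identifications.
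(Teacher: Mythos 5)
Your proposal is correct and follows essentially the same route as the paper's: decompose via Corollary \ref{quotient}, restrict along $\res^{\Gamma(G)}_{\Gamma(C)}$ using the preceding lemma and \cite[Proposition 3.3]{W}, invoke semisimplicity of $\Gamma(C)$ to identify projective indecomposables with simples, replace $S_{L,W}^{\Gamma(C)}$ by the coinduced $\coind_{\Delta(C)}^{\Gamma(C)} S_{L,W}^{\Delta(C)}$, and run the adjunction chain with Theorem \ref{thm:alchemic-equivs}(2) and the Morita equivalence for $\Omega(C)$. Your observation that cyclicity of $C$ enters only through the semisimplicity of $\Gamma(C)$ is consistent with where the paper actually uses the hypothesis.
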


\section{Induced functors for $p$-groups}
Let $p$ be a prime number and $P$ be a non-cyclic $p$-group. In this section, we prove that the biset functor $\ind_\mu^\Gamma
S_{P,1}^\mu$ is never indecomposable. By Corollary \ref{corollary:top}, we have
\[
\ind_\mu^\Gamma S_{P,1}^\mu \cong P_{P,1}^\Gamma \oplus F
\]
where $F$ is a projective biset functor. Our first aim is to show that if $P$ has order at least $p^3$, then $P_{E_2,1}^\Gamma$ is 
a summand of $F$, where $E_2$ is an elementary abelian group of rank $2$. 

By Proposition \ref{codefres}, we need to prove that the trivial $k\out(P)$-module $k$ is a direct summand of the $k\out(P)$-module
$$\codef^\rho_\Omega\res^\Gamma_\rho S_{E_2,1}^\Gamma (P) = \bigcap_{L< P} \ker (\res^P_L: S_{E_2,1}^\Gamma(P)\to 
S_{E_2,1}\Gamma(L)).$$ 

\noindent In other words, we need to show that there is an element $\zeta\in S_{E_2,1}^\Gamma(P)$ such that
\begin{enumerate}
\item $\zeta$ is $\out(P)$-invariant, that is, for any $\phi\in\out(P)$, we have $\iso_{P,P}^\phi(\zeta) =\zeta$, and
\item for any proper subgroup $L$ of $P$, the restriction $\res^P_L \zeta$ is equal to zero.
\end{enumerate}
Now by \cite[Theorem 10.1]{BT}, we have an isomorphism
\[
S_{E_2,1}^\Gamma \cong kD
\]
of biset functors, where $kD$ is the biset functor of the torsion-free Dade group. The element $\zeta$ with the above 
stated properties exists in the Dade group. Indeed, let
\[
\zeta = \Omega_{M(P)}
\]
be the relative syzygy with respect to the $P$-set $M(P)$, where $M(P)$ is the disjoint union of the sets $P/Q$ as $Q$ runs over
all maximal subgroups of $P$, as defined by Bouc in \cite[Notation 6.2.1]{B}. Then
by \cite[Remark 6.2.2]{B}, the restriction of $\zeta$ to any proper subgroup of $P$ is zero. Moreover by \cite[Corollary 4.1.2]{B}, for any
$\phi\in\out(P)$, we have
\[
\iso_{P,P}^\phi \zeta = \iso_{P,P}^\phi (\Omega_{M(P)}) = \Omega_{{}^\phi M(P)} =\zeta
\]
since any automorphism of $P$ only permutes the maximal subgroups of $P$. In particular, $\zeta$ is $\out(P)$-invariant. Therefore the
$k\out(P)$-module $\codef^\rho_\Omega\res^\Gamma_\rho kD(P)$ contains the trivial $k\out(P)$-module $k$ as a direct summand. Hence
we have proved the following theorem.
\begin{thm}\label{thm:p}
Let $p$ be a prime number and $P$ be a non-cyclic $p$-group of order at least $p^3$. Then 
\[
\ind_\mu^\Gamma S_{P,1}^\mu \cong P_{P,1}^\Gamma \oplus P_{E_2,1}^\Gamma \oplus Z
\]
where $E_2$ is an elementary abelian group of rank $2$ and $Z$ is a (possibly zero) projective biset functor. In particular, the biset
functor $\ind_\mu^\Gamma S_{P,1}^\mu$ is not indecomposable. 
\end{thm}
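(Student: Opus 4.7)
The plan is to exhibit $P_{E_2,1}^\Gamma$ as a second direct summand of $\ind_\mu^\Gamma S_{P,1}^\mu$ beyond the summand $P_{P,1}^\Gamma$ already provided by Corollary \ref{corollary:top}. Once this is accomplished, collecting everything else as $Z$ yields the stated decomposition, with $Z$ projective because it is a complement of projective summands inside a projective functor. Indecomposability then fails, since the hypothesis $|P|\ge p^3$ forces $E_2$ to be a proper subquotient of $P$, so that $P_{E_2,1}^\Gamma$ is not isomorphic to $P_{P,1}^\Gamma$.

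To secure the extra summand I would appeal to Proposition \ref{codefres}, which identifies the multiplicity $n_{E_2,1}^{P,1}$ with the multiplicity of the trivial $k\out(P)$-module inside $\codef^\rho_\Omega \res^\Gamma_\rho S_{E_2,1}^\Gamma(P)$. The explicit formula $\codef^\rho_\Omega D(P) = \bigcap_{L<P} \ker(\res^P_L)$ makes the goal concrete: produce a single non-zero element $\zeta \in S_{E_2,1}^\Gamma(P)$ that is $\out(P)$-invariant and whose restriction to every proper subgroup of $P$ vanishes. Since $k\out(P)$ is semisimple, the $\out(P)$-fixed line spanned by such a $\zeta$ splits off as a trivial summand, forcing $n_{E_2,1}^{P,1}\ge 1$.

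The natural source for $\zeta$ is the Bouc--Th\'{e}venaz identification $S_{E_2,1}^\Gamma \cong kD$ with the torsion-free Dade group functor, which places at my disposal the family of relative syzygies $\Omega_X$ attached to $P$-sets $X$. My candidate is $\zeta = \Omega_{M(P)}$, where $M(P)$ is the disjoint union of the transitive sets $P/Q$ as $Q$ ranges over the maximal subgroups of $P$. For the vanishing of every proper restriction I would rely on the fact that any proper $L\le P$ lies in some maximal subgroup, so that each constituent $P/Q$ has an $L$-fixed point and the corresponding relative syzygy restricts to zero; for $\out(P)$-invariance I would use that any automorphism of $P$ merely permutes its maximal subgroups, sending $M(P)$ to an isomorphic $P$-set. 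Both of these behaviours of relative syzygies are standard facts drawn from Bouc's monograph.

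The main obstacle I anticipate is confirming that $\zeta$ is \emph{non-zero} in $kD(P)$, since relative syzygies are known to satisfy non-trivial relations in the Dade group. This is precisely where the hypothesis $|P|\ge p^3$ should enter, ruling out the degenerate small cases in which $M(P)$ would collapse, and the non-vanishing ought to be extractable from the existing literature on the torsion-free Dade group for non-cyclic $p$-groups. Granted this input, the argument closes: $P_{E_2,1}^\Gamma$ appears with positive multiplicity in $\ind_\mu^\Gamma S_{P,1}^\mu$, and the theorem follows.
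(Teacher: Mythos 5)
Your proposal follows the paper's proof essentially step for step: split off $P_{P,1}^\Gamma$ via Corollary \ref{corollary:top}, reduce via Proposition \ref{codefres} to exhibiting a nonzero $\out(P)$-invariant element of $\bigcap_{L<P}\ker\res^P_L$ inside $S_{E_2,1}^\Gamma(P)$, invoke the Bouc--Th\'evenaz identification $S_{E_2,1}^\Gamma \cong kD$, and take $\zeta = \Omega_{M(P)}$, using Bouc's results on relative syzygies for the vanishing of proper restrictions and for $\out(P)$-invariance (the paper cites \cite[Remark 6.2.2]{B} and \cite[Corollary 4.1.2]{B} respectively). The one place where you go beyond the paper is in explicitly flagging the need to verify $\zeta\neq 0$ in $kD(P)$; the paper passes over this silently, simply asserting that an element with the two stated properties exists. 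Your caution is well placed --- the conclusion genuinely requires $\zeta\neq 0$ --- though you, like the paper, leave the non-vanishing as an appeal to the literature rather than supplying an argument.
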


Next we consider the remaining case of $p$-groups, namely the case where $P=E_2$ is an elementary abelian $p$-group of rank 
$2$. In this case, we claim that the following decomposition holds.
\[
\ind_\mu^\Gamma S_{E_2,1} \cong P_{E_2,1}^\Gamma \oplus P_{C_p,1}^\Gamma \oplus P
\]
where $P$ is a (possibly-zero) projective biset functor whose only indecomposable summands are of the form 
$P_{C_p,W}^\Gamma$ for some non-trivial simple $k\out(C_p)$-module $W$.
To obtain this decomposition, note that by \cite[Theorem 10.1]{BT}, we have $kD(E_2) \cong k$. Moreover since $E_2$ is 
minimal for the functor $kD$, we have $kD(H) = 0$ if the order of $H$ is less than $p^2$. Therefore, we have
\[
kD(E_2) = \codef^\rho_\Omega\res^\Gamma_\rho kD (E_2)\cong k.
\]
Thus $P_{E_2,1}^\Gamma$ appears in $\ind_\mu^\Gamma S_{E_2,1}^\mu$ only once. We also know, by Lemma \ref{lem:burn},
 that $P_{1,1}^\Gamma$ does not appear. Thus, by Corollary \ref{quotient}, the only other possibility is $P_{C_p,W}^\Gamma$, 
 where $C_p$ is a cyclic group of 
order $p$ and $W$ is a simple $k\out(C_p)$-module. We claim that the multiplicity $n_p$ of $P_{C_p,1}^\Gamma$ is equal to 1. By 
the above considerations, the number
$n_p$ is equal to the dimension of the subspace of $\out(E_2)$-invariant elements in the $k\out(E_2)$-module 
$\codef^\rho_\Omega\res^\Gamma_\rho S_{C_p,1}^\Gamma (E_2)$. In other words, we need to show, as in the previous case, that
there is an element $\zeta\in S_{C_p,1}^\Gamma(E_2)$ such that
\begin{enumerate}
\item $\zeta$ is $\out(E_2)$-invariant, that is, for any $\phi\in\out(E_2)$, we have $\iso_{E_2,E_2}^\phi(\zeta) =\zeta$, and
\item for any proper subgroup $L$ of $E_2$, the restriction $\res^{E_2}_L \zeta$ is equal to zero.
\end{enumerate}

In order to show that such an element exists, we first describe the evaluation
of $S_{C_p,1}^\Gamma$ at $E_2$. By \cite[Theorem 4.4]{C3}, we have
\[
S_{C_p,1}^\Gamma (E_2) \cong \frac{\ind_\nabla^\Gamma S_{C_p,1}^\nabla (E_2)}{\mathcal K_{C_p,1}(E_2)}
\]
where $\mathcal K_{C_p,1}(E_2)$ is the intersection of kernels of all morphisms from $E_2$ to a group of smaller order. We 
claim that this submodule is zero and hence there is an isomorphism 
\[
S_{C_p,1}^\Gamma (E_2) \cong \ind_\nabla^\Gamma S_{C_p,1}^\nabla(E_2)
\]
of $k\out(E_2)$-modules. Now by \cite[Section 5]{C3}, we have
\[
\ind_\nabla^\Gamma S_{C_p,1}^\nabla (E_2) = \Big( \bigoplus_{\substack{H\in \mbox{\tiny $\sq$}(E_2),\\ H\cong C_p}} k\Big)_{E_2}
\]
where for any $kG$-module $M$, we write $M_G$ for the largest quotient of $M$ on which $G$ acts trivially. Here note that, although
the sum in \cite{C3} should be over all subquotients of $E_2$, in our case, we can restrict to the subquotients of order $p$ since the 
functor $S_{C_p,1}^\nabla$ is non-zero only on the isomorphism class of the group $C_p$.

Moreover, by Section 5
of \cite{C3}, the group $\out(E_2)$ acts on the sum by permuting the components. Since the $E_2$-action on its subquotients
is trivial, we have
\[
\ind_\nabla^\Gamma S_{C_p,1}^\nabla (E_2) = \bigoplus_{\substack{H\in \mbox{\tiny $\sq$}(E_2)\\ H\cong C_p}} k.
\]
Now the group $E_2$ has $p+1$ subgroups of order $p$, say $A_0, A_1, \ldots, A_p$ and $p+1$ corresponding
quotients $Q_0 := E_2/A_0, \ldots, Q_p:= E_2/A_p$, each of order $p$. Thus we have
\[
\ind_\nabla^\Gamma S_{C_p,1}^\nabla (E_2) \cong \bigoplus_{i=0}^p k \oplus \bigoplus_{j=0}^p k
\]
as permutation $k\out(E_2)$-modules. Furthermore, by the proof of Theorem 5.1 in \cite{C3}, the module $\ind_\nabla^\Gamma 
S_{C_p,1}^\nabla (E_2)$ is generated by the elements of the form $\tin_H^{E_2}\otimes a$ where $H$ is a subquotient of $E_2$ 
and $a\in S_{C_p,1}^\nabla (H)$. Since $S_{C_p,1}^\nabla (H)$ is one-dimensional when $H$ is isomorphic to $C_p$, we can 
rewrite the above equality in the following form
\[
\ind_\nabla^\Gamma S_{C_p,1}^\nabla (E_2) \cong \bigoplus_{i=0}^p k\tin_{A_i}^{E_2}\otimes 1 \oplus \bigoplus_{j=0}^p 
k\tin_{Q_j}^{E_2}\otimes 1
\]
still as permutation $k\out(E_2)$-modules. Now to determine the submodule $\mathcal K_{C_p,1}(E_2)$, we need to evaluate the restrictions and
deflations of the above basis elements. By the descriptions of these maps given in Section 5 of \cite{C3}, we have
\[
\res^{E_2}_{A_i} \tin_{A_j}^{E_2}\otimes 1 = |A_i\backslash E_2/A_j| \tin_{A_i\cap A_j}^{A_i}\res^{A_j}_{A_i\cap A_j}\otimes 1
=  \left\{
        \begin{array}{ll}
            p(\iso_{A_i}\otimes 1) & \quad \mbox{\rm{if}}\, i=j \\
            0 & \quad \mbox{\rm{otherwise}}.
        \end{array}
    \right.
\]
Here to get the second row of the above equality, note that, when $i\neq j$, we have $A_i\cap A_j =1$ and by the definition of the 
simple functor $S_{C_p,1}^\nabla$, the restriction of any element to a group of order smaller than $p$ is zero. In the first 
row, we put $\iso_{A_i}:= \iso_{A_i,A_i}^{\mbox{\rm\tiny id}}$. Similarly, we have the following equalities.
\[
\res^{E_2}_{A_i} \tin_{E_2/A_j}^{E_2}\otimes 1
=  \left\{
        \begin{array}{ll}
            \iso_{A_i}\otimes 1 & \quad \mbox{\rm{if}}\, i\neq j \\
            0 & \quad \mbox{\rm{otherwise}}.
        \end{array}
    \right.
\]
\[
\defl^{E_2}_{E_2/A_i} \tin_{A_j}^{E_2}\otimes 1
=  \left\{
        \begin{array}{ll}
            \iso_{A_i}\otimes 1 & \quad \mbox{\rm{if}}\, i\neq j \\
            0 & \quad \mbox{\rm{otherwise}}.
        \end{array}
    \right.
\]
\[
\defl^{E_2}_{E_2/A_i} \tin_{E_2/A_j}^{E_2}\otimes 1
=  \left\{
        \begin{array}{ll}
            \iso_{A_i}\otimes 1 & \quad \mbox{\rm{if}}\, i= j \\
            0 & \quad \mbox{\rm{otherwise}}.
        \end{array}
    \right.
\]
Now let 
\[
x= \sum_i a_i\tin_{A_i}^{E_2}\otimes 1 + \sum_j b_j\tin_{E_2/A_j}^{E_2}\otimes 1
\]
be an element of $\ind_\nabla^\Gamma S_{C_p,1}^\nabla(E_2)$. Then by the above calculations, we have
\[
\res^{E_2}_{A_k}(x) = p\cdot a_k + \sum_{j\neq k} b_j
\]
and 
\[
\defl^{E_2}_{E_2/A_k}(x) = \sum_{i\neq k} a_i +  b_k.
\]
Therefore $x\in\mathcal K_{C_p,1}(E_2)$ if and only if the equations
\[
p\cdot a_k + \sum_{j\neq k} b_j =0\quad  \mbox{\rm{and}}\quad \sum_{i\neq k} a_i +  b_k =0
\]
are satisfied for all $k$. Note that if we order the subquotients of order $p$ as $A_0, A_1,\ldots, A_p,$  $Q_0,\ldots, Q_p$, the 
corresponding coefficient matrix is of the form
\[
C=\left[
\begin{array}{cc}
p\cdot I & A\\ 
A & I
\end{array}\right]
\]
where $I$ is the identity matrix of size $p+1$ and $A$ is the matrix 
$$
\left[\begin{array}{cccc}
0&1&\cdots &1\\
1&0&\cdots &1\\
\vdots & &\ddots &\vdots \\
1&1&\cdots &0\\
\end{array}\right]
$$
which has zeros on the diagonal and 1 at any other entry. Straightforward calculations show that the $(2p+2)\times (2p+2)$-matrix 
$C$ is non-singular
(actually it has determinant $(-1)^pp(1-p)^{p+1}$). Therefore the equation $Cx= 0$ has no non-zero solution, hence 
$\mathcal K_{C_p,1}(E_2) = 0$, as required, and we have
\begin{eqnarray}\label{eqn:ate2}
S_{C_p,1}^\Gamma (E_2) \cong \ind_\nabla^\Gamma S_{C_p,1}^\nabla(E_2) = \bigoplus_{H\in\mbox{\tiny $\sq$}(E_2), H\cong C_p} k
\end{eqnarray}
as permutation $k\out(E_2)$-modules. This completes the description of the evaluation of the simple functor $S_{C_p,1}^\Gamma$
at $E_2$. Next we proceed to show that an element $\zeta$ with the above stated properties exists in the evaluation in 
(\ref{eqn:ate2}).

We first determine the intersection of kernels of restrictions to subgroups $A_i$. Let $x$ be as above. Then, by the above 
calculations, we have 
\[
\res^{E_2}_{A_k}(x) = 0 \quad \mbox{\rm for all $k$ if and only if} \quad p\cdot a_k + \sum_{j\neq k}b_j = 0\quad \mbox{\rm for all 
k.}
\]
Thus if the restriction to all subgroups $A_i$ of $x$ is equal to zero, then the coefficients $a_i$ are uniquely determined by the
rest of the coefficients $b_j$ by the formula
\[
a_k = \frac{1}{p} \sum_{j\neq k}b_j.
\]
In particular, we are free to choose the coefficients $b_j$ and hence
\[
\dim_k\codef^\rho_\Omega\res^\Gamma_\rho S_{C_p,1}^\Gamma(E_2) = p+1.
\]
Next we look for $\out(E_2)$-invariant elements in this kernel since the element $\zeta$ whose existence in claimed above is 
$\out(E_2)$-invariant. Now it is clear that $x$ is $\out(E_2)$-invariant if and only if 
$b_i = b_j$ for all $i$ and $j$ since $\out(E_2)$ permutes these coefficients. Thus in the $k\out(E_2)$-module 
$\codef^\rho_\Omega\res^\Gamma_\rho S_{C_p,1}^\Gamma(E_2)$, there is a unique $\out(E_2)$-invariant element, up to a 
constant multiple, and hence we have completed the proof of the following result.
\begin{pro}
Let $p$ be a prime number. Then there is an isomorphism 
\[
\ind_\mu^\Gamma S_{E_2,1}^\mu \cong P_{E_2,1}^\Gamma \oplus P_{C_p,1}^\Gamma \oplus P
\]
of biset functors where $P$ is a (possible zero) projective biset functor whose only indecomposable summands are of the form
$P_{C_p,W}^\Gamma$ for some simple $k\out(C_p)$-module $W$.
\end{pro}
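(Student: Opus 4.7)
The plan is to combine the structural restrictions from the preceding sections with an explicit calculation of the evaluation $S_{C_p,1}^\Gamma(E_2)$ and its $\out(E_2)$-invariants, closely mirroring the arrangement used above the statement.

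First, I would pin down which summands can possibly occur. By Corollary \ref{corollary:top}, $P_{E_2,1}^\Gamma$ occurs exactly once and no other $P_{E_2,W}^\Gamma$ occurs. By Corollary \ref{quotient}, every further indecomposable summand is supported on a proper quotient of $E_2$, that is, on the trivial group or on $C_p$. The trivial-group summand $P_{1,1}^\Gamma$ is ruled out by Lemma \ref{lem:burn}. Thus the only remaining possibilities are of the form $P_{C_p,W}^\Gamma$, and the whole proposition reduces to showing that the multiplicity $n_p$ of $P_{C_p,1}^\Gamma$ equals one.

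By Proposition \ref{codefres}, $n_p$ is the dimension of the trivial-isotypic component of the $k\out(E_2)$-module $\codef^\rho_\Omega \res^\Gamma_\rho S_{C_p,1}^\Gamma(E_2)$, so I need an explicit handle on this evaluation. Using \cite[Theorem 4.4]{C3} it presents as a quotient of $\ind_\nabla^\Gamma S_{C_p,1}^\nabla(E_2)$ by the obstruction $\mathcal K_{C_p,1}(E_2)$, and by \cite[Section 5]{C3} the induced module is a permutation $k\out(E_2)$-module with basis $\tin_{A_i}^{E_2}\otimes 1$ indexed by the $p+1$ subgroups of order $p$ together with $\tin_{E_2/A_j}^{E_2}\otimes 1$ indexed by the $p+1$ quotients of order $p$.

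The crucial step is then to show $\mathcal K_{C_p,1}(E_2)=0$. For this I would compute the four families of restriction and deflation values on the basis vectors (each vanishing unless the indices match in the appropriate pattern), assemble the resulting linear system into a $2(p+1)\times 2(p+1)$ block matrix whose four blocks are $pI$, $A$, $A$, $I$ with $A$ having zeros on the diagonal and ones elsewhere, and verify that this matrix is non-singular; a direct expansion yields determinant $(-1)^p p(1-p)^{p+1} \neq 0$. This identifies $S_{C_p,1}^\Gamma(E_2)$ with the permutation module itself. Finally, intersecting only the kernels of the restrictions $\res^{E_2}_{A_k}$ forces $a_k = \frac{1}{p}\sum_{j\neq k}b_j$ while leaving the $b_j$ free, producing a $(p+1)$-dimensional space on which $\out(E_2)$ acts by transitively permuting the $b_j$'s; the invariant subspace is therefore one-dimensional, giving $n_p=1$. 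The main technical obstacle is the block-matrix determinant calculation proving $\mathcal K_{C_p,1}(E_2)=0$; everything else is a routine translation of earlier lemmas, paralleling the non-cyclic argument of Theorem \ref{thm:p}.
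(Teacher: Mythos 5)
Your proposal matches the paper's argument essentially line for line: narrow the possible summands via Corollaries \ref{corollary:top} and \ref{quotient} and Lemma \ref{lem:burn}, reduce the remaining multiplicity $n_p$ to computing $\out(E_2)$-invariants in $\codef^\rho_\Omega\res^\Gamma_\rho S_{C_p,1}^\Gamma(E_2)$ via Proposition \ref{codefres}, kill $\mathcal K_{C_p,1}(E_2)$ by the block determinant $(-1)^p p(1-p)^{p+1}$, and solve the linear system to exhibit a $(p+1)$-dimensional kernel with a one-dimensional invariant subspace. The only cosmetic divergence is that the paper re-derives the top multiplicity using the identification $S_{E_2,1}^\Gamma\cong kD$ and Theorem 10.1 of \cite{BT}, whereas you invoke Corollary \ref{corollary:top} directly, which is equivalent.
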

With this proposition, Lemma \ref{lem:burn} and Theorem \ref{thm:p}, the following corollary is immediate.
\begin{cor}\label{cor:non}
Let $p$ be a prime number and $P$ be a $p$-group. Then the biset functor $\ind_\mu^\Gamma S_{P,1}^\mu$ is not
indecomposable.
\end{cor}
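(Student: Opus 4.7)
The plan is to argue by cases on the structure of the $p$-group $P$. Every non-trivial $p$-group falls into exactly one of three classes: (i) non-cyclic of order at least $p^3$, (ii) elementary abelian of rank $2$, and (iii) cyclic. Observing further that Corollary \ref{corollary:top} always gives a decomposition
\[
\ind_\mu^\Gamma S_{P,1}^\mu \cong P_{P,1}^\Gamma \oplus F
\]
with $F$ a projective biset functor, the task reduces in each case to producing at least one non-zero indecomposable summand of $F$ (automatically non-isomorphic to $P_{P,1}^\Gamma$, since in each case the group parameter of the second summand will be a proper quotient of $P$ provided by Corollary \ref{quotient}).

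For class (i), Theorem \ref{thm:p} already supplies $P_{E_2,1}^\Gamma$ as a direct summand, the key ingredient being the $\out(P)$-invariant relative syzygy $\Omega_{M(P)}$ living in $kD(P)\cong S_{E_2,1}^\Gamma(P)$ with vanishing restrictions to proper subgroups. For class (ii), the Proposition immediately preceding produces $P_{C_p,1}^\Gamma$ via the explicit $\out(E_2)$-invariant element of $S_{C_p,1}^\Gamma(E_2)$ constructed inside the intersection of kernels of all proper restrictions. For class (iii), we invoke Lemma \ref{lem:burn}, which furnishes the analogous additional indecomposable projective summand distinct from $P_{P,1}^\Gamma$ in the remaining cyclic case.

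Combining the three outputs gives the corollary with no further calculation. The main obstacle, insofar as there is one, is the cyclic case: the decomposition theorem of the previous section for cyclic groups does not on its face force $\ind_\mu^\Gamma S_{P,1}^\mu$ to have more than one summand, so one must genuinely appeal to Lemma \ref{lem:burn} to exhibit, in the same spirit as in classes (i) and (ii) (an $\out(P)$-invariant element of an appropriate simple-functor evaluation that vanishes under every proper restriction), the second projective indecomposable summand needed to rule out indecomposability. Once this cyclic input is in hand, assembling the three cases into the statement of Corollary \ref{cor:non} is immediate.
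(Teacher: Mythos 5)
Your proposal is correct and follows essentially the same route as the paper: the paper obtains the corollary by simply citing the preceding proposition on $E_2$, Lemma \ref{lem:burn} for the cyclic case, and Theorem \ref{thm:p} for non-cyclic groups of order at least $p^3$, and your division of the non-trivial $p$-groups into exactly those three classes is precisely the implicit case analysis. The only thing you make explicit that the paper leaves tacit is that the second summand produced in each case is genuinely distinct from $P_{P,1}^\Gamma$, which is a worthwhile (and correct) clarification.
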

\section{Induced functors for simple groups}
In this section, we describe the induced simple native Mackey functors that are indexed by simple groups. We start with the trivial
case of the trivial group. In this case, by the classification of the simple native Mackey functors, there is a unique simple functor 
which is $S_{1,1}^\mu$. Note that, by the isomorphism in (\ref{eqn:simples}), this functor is the constant functor $\underline k$ which
takes the value $k$ at any subquotient of $G$ and maps any morphism to the identity homomorphism $k\to k$.

By Corollary \ref{quotient}, we immediately get the isomorphism   
\[
\ind_\mu^\Gamma S_{1,1}^\mu \cong P_{1,1}^\Gamma
\]
of biset functors. Moreover by \cite[Remark 5.1.3]{B96}, the projective indecomposable biset functor $P_{1,1}^\Gamma$ is 
isomorphic to the Burnside functor $kB^G$. Thus we obtain the following result.
\begin{pro} There is an isomorphism
\[
\ind_\mu^\Gamma S_{1,1}^\mu \cong kB^G
\]
 of biset functors.
\end{pro}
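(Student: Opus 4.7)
The plan is to combine the structural restrictions on summands of induced simple native Mackey functors that have already been established, applied to the especially rigid case of the trivial group. Since $1$ has no proper subquotients, the classification of summands collapses to a single indecomposable, and the statement then reduces to identifying that indecomposable with the Burnside functor.

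First, I would invoke Corollary \ref{quotient}: any projective indecomposable $P_{K,W}^\Gamma$ appearing as a summand of $\ind_\mu^\Gamma S_{1,1}^\mu$ must satisfy the condition that $K$ is isomorphic to a quotient of the trivial group. The only such $K$ is the trivial group itself, and its outer automorphism group is trivial, so the only candidate simple $k\out(K)$-module is the trivial module $W = 1$. Hence the only possible summand is $P_{1,1}^\Gamma$, and we can write
\[
\ind_\mu^\Gamma S_{1,1}^\mu \cong n \cdot P_{1,1}^\Gamma
\]
for some non-negative integer $n$.

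Next, I would apply Corollary \ref{corollary:top} with $H = 1$ and $V = W = 1$, which gives that the multiplicity of $P_{1,1}^\Gamma$ in $\ind_\mu^\Gamma S_{1,1}^\mu$ is exactly $1$. Combining this with the previous paragraph yields
\[
\ind_\mu^\Gamma S_{1,1}^\mu \cong P_{1,1}^\Gamma.
\]

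Finally, I would quote \cite[Remark 5.1.3]{B96}, which identifies the projective cover of the simple biset functor at the trivial pair $(1,1)$ with the Burnside functor $kB^G$. Chaining the two isomorphisms gives the desired conclusion. There is essentially no obstacle here: the work was already done in the general machinery, and this proposition is the easiest instance, serving as a warm-up before the nontrivial simple group cases. The only point to be careful about is that the identification $P_{1,1}^\Gamma \cong kB^G$ in \cite{B96} is stated for the full biset category, but by the restriction remark at the end of Section 2 it transfers without change to $\mathcal C_G$.
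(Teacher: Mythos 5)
Your proof is correct and follows the same route as the paper: restrict the possible summands via Corollary \ref{quotient}, pin the multiplicity of $P_{1,1}^\Gamma$ to $1$, and identify $P_{1,1}^\Gamma$ with $kB^G$ via Bouc's remark. You are in fact slightly more explicit than the paper, which invokes only Corollary \ref{quotient} and leaves the multiplicity-one step (your appeal to Corollary \ref{corollary:top}) implicit.
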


Note that, in this example, the trivial group has no non-trivial proper quotients. This property is also shared by all simple groups. Our next aim
is to determine the induction of the simple native Mackey functors parameterized by simple groups, that is, the biset functor $\ind_\mu^\Gamma
S_{H,V}^\mu$ where $H$ is a simple group. By Corollary \ref{quotient}, this functor has at most two summands, namely $P_{H,V}^\Gamma$
and $P_{1,1}^\Gamma$ with the first one of multiplicity one. Thus we only need to determine the multiplicity of $kB$ in
the induced functor.

Next we determine the multiplicity $m_B$ of the Burnside functor $kB^G = P_{1,1}^\Gamma$ in the induced 
functor $\ind_\mu^\Gamma S_{H,V}^\mu$ for any pair $(H,V)$ and then specialize to the case of simple groups. First, by 
Proposition \ref{codefres}, we have
\[
m_B = \dim_k\hom_{k\out(H)}(V,\codef^\rho_\Omega\res^\Gamma_\rho S_{1,1}^\Gamma(H)).
\]
Also, by \cite[Proposition 4.4.8]{B96}, we have
\[
S_{1,1}^\Gamma \cong k\mathcal R_{\mathbb Q}
\]
where $\mathcal R_{\mathbb Q}$ is the functor of rational characters of finite groups. As in the previous sections, we need to 
determine the $\out(H)$-invariant elements in $\codef^\rho_\Omega\res^\Gamma_\rho S_{1,1}^\Gamma(H)$. For this aim, we first 
determine the restriction of $S_{1,1}^\Gamma$ to the category of native Mackey functors. By the above identification, this amounts 
to determine the simple summands of the native Mackey functor $k\mathcal R_{\mathbb Q}$ of rational representations. Note further 
that, by the isomorphism given in (\ref{eqn:twin}), we have
\[
k\mathcal R_{\mathbb Q} \cong \ind_{\rho}^{\mu}\inf_{\Omega}^{\rho}\defl_{\Omega}^{\tau}\res^{\mu}_{\tau} k\mathcal R_{\mathbb Q}.
\]
On the other hand, by the definition of the deflation functor $\defl^\tau_\Omega$, for any $H\in \sq(G)$, we have
\[
\defl^\tau_\Omega k\mathcal R_{\mathbb Q} (H) = k\mathcal R_{\mathbb Q} (H) \Big/ \sum_{L < H} {\mbox{\rm Im}} (\ind_L^H).
\] 
Now, by Artin's Induction Theorem, the above quotient is non-zero only if the group $H$ is cyclic. Moreover, when the group $H$
is cyclic, the quotient is isomorphic to $k$ as $k\out(H)$-modules. Indeed, in this case, by \cite[Theorem 30]{S}, the set of characters 
$\ind_L^H 1$ as $L$ runs over all subgroups of $H$ generates $k\mathcal R_{\mathbb Q}(H)$ and moreover, as indicated in Exercise 
13.8 in \cite{S}, this set is actually a basis of $k\mathcal R_{\mathbb Q}(H)$. It is now clear that the quotient is one dimensional. 
Moreover the $\out(H)$-action on this vector space is trivial since $k\out(H)$ fixes the character $1=\ind_H^H 1$.

With this observation, we conclude that there is an isomorphism 
\[
\defl^\tau_\Omega\res^\Gamma_\tau S_{1,1}^\Gamma \cong \bigoplus_{\substack{H\in\tiny{\sq}(G): H: \mbox{\rm{\scriptsize{cyclic}}}
\\ \mbox{\rm \scriptsize{up to isomorphism}}}} S_{H,1}^\Omega
\]
of $\Omega$-modules. Thus by the above isomorphism and the isomorphism given in Equation (\ref{eqn:simples}), we get that 
following result.
\begin{pro}\label{thm:nativerat}
As a native Mackey functor for $G$ over $k$, the functor of rational representations decomposes as
\[
k\mathcal R_{\mathbb Q} \cong  \bigoplus_{\substack{H\in\tiny{\sq}(G): H: \mbox{\rm{\scriptsize{cyclic}}}
\\ \mbox{\rm \scriptsize{up to isomorphism}}}} S_{H,1}^\mu.
\]
\end{pro}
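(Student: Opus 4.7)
The plan is to exploit the category equivalence between $\Omega_k(G)$-modules and native Mackey functors recorded in the discussion surrounding (\ref{eqn:twin}) and (\ref{eqn:simples}). By (\ref{eqn:twin}) applied to $M = k\mathcal R_{\mathbb Q}$, there is a natural isomorphism
\[
k\mathcal R_{\mathbb Q} \cong \ind_\rho^\mu \infl_\Omega^\rho \defl_\Omega^\tau \res_\tau^\mu\, k\mathcal R_{\mathbb Q}
\]
of native Mackey functors, so it suffices to decompose the $\Omega$-module $D := \defl_\Omega^\tau \res_\tau^\mu\, k\mathcal R_{\mathbb Q}$ into simple summands and then apply $\ind_\rho^\mu \infl_\Omega^\rho$ termwise, using the identification $\ind_\rho^\mu \infl_\Omega^\rho S_{H,1}^\Omega \cong S_{H,1}^\mu$ from (\ref{eqn:simples}).

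The second step is to compute $D(H)$ at each subquotient $H$ of $G$. By the description of the deflation functor from $\tau$ to $\Omega$ used throughout the paper,
\[
D(H) = k\mathcal R_{\mathbb Q}(H) \Big/ \sum_{L<H} \mathrm{Im}\bigl(\ind_L^H\bigr).
\]
At this point Artin's induction theorem enters: for non-cyclic $H$, the rational characters of $H$ are spanned by characters induced from proper subgroups, so $D(H) = 0$. For cyclic $H$, the standard basis result (Serre, Exercise 13.8) that $\{\ind_L^H 1_L : L\le H\}$ is a basis of $k\mathcal R_{\mathbb Q}(H)$ shows that $D(H)$ is one-dimensional, generated by the class of the trivial character $1_H$. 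Because $\out(H)$ permutes these basis elements among themselves and fixes $1_H$, the induced $k\out(H)$-action on $D(H)$ is trivial.

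Combining these computations gives, for each isomorphism class of cyclic subquotient $H$ of $G$, exactly one summand of $D$ isomorphic to $S_{H,1}^\Omega$, which by definition is concentrated on the isomorphism class of $H$ with value the trivial module. Hence
\[
\defl_\Omega^\tau \res_\tau^\mu\, k\mathcal R_{\mathbb Q} \cong \bigoplus_{\substack{H \in \sq(G) \\ H \text{ cyclic, up to iso}}} S_{H,1}^\Omega,
\]
and pushing this through $\ind_\rho^\mu \infl_\Omega^\rho$ via (\ref{eqn:simples}) yields the claimed decomposition of $k\mathcal R_{\mathbb Q}$. The only real input beyond the formalism set up earlier in the paper is the Artin-type analysis of $D(H)$; I expect no obstacle beyond that, since the equivalence of categories does all the subsequent bookkeeping automatically.
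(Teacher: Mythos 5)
Your proposal is correct and follows essentially the same route as the paper: invoke the equivalence given by (\ref{eqn:twin}), compute $\defl_\Omega^\tau\res_\tau^\mu k\mathcal R_{\mathbb Q}(H)$ via Artin's induction theorem and Serre's basis of $\{\ind_L^H 1\}$ for cyclic $H$, observe that $\out(H)$ fixes the class of $1_H$, and push the resulting $\Omega$-module decomposition back through $\ind_\rho^\mu\infl_\Omega^\rho$ using (\ref{eqn:simples}). No substantive differences from the paper's argument.
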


Recall that we are looking for elements $\zeta$ in $k\mathcal R_{\mathbb Q}(H)$ such that
\begin{enumerate}
\item $\zeta$ is $\out(H)$-invariant, that is, for any $\phi\in\out(H)$, we have $\iso_{H,H}^\phi(\zeta) =\zeta$, and
\item for any proper subgroup $L$ of $H$, the restriction $\res^{H}_L \zeta$ is equal to zero.
\end{enumerate}

The elements satisfying the second property lies in 
\[
\codef^\rho_\Omega\res^\Gamma_\rho k\mathcal R_{\mathbb Q}(H) = \bigcap_{K< H} \ker(\res^H_K:k\mathcal R_{\mathbb Q}(H)\to
k\mathcal R_{\mathbb Q}(K)).
\]

Now by Equation (\ref{eqn:simples}) and by Proposition \ref{thm:nativerat}, we have an isomorphism
\[
k\mathcal R_{\mathbb Q} \cong \coind_\tau^\mu\infl_\Omega^\tau\defl^\tau_\Omega\res^\mu_\tau k\mathcal R_{\mathbb Q}
\]
of native Mackey functors. Therefore, we have
\begin{eqnarray*}
\codef^\rho_\Omega\res^\mu_\rho k\mathcal R_{\mathbb Q} &\cong & \codef^\rho_\Omega\res^\mu_\rho\coind_\tau^\mu\infl_\Omega^\tau\defl^\tau_\Omega\res^\mu_\tau k\mathcal R_{\mathbb Q}\\
&\cong& \codef^\rho_\Omega\coind^\rho_\Omega\res_\Omega^\tau\infl_\Omega^\tau\defl^\tau_\Omega\res^\mu_\tau k\mathcal R_{\mathbb Q}
\end{eqnarray*}
where we use Theorem \ref{thm:native-equivs} to interchange the coinduction and the restriction functors. Then, again by Theorem
\ref{thm:native-equivs}, the composition $\codef^\rho_\Omega\coind^\rho_\Omega$ and the composition 
$\res_\Omega^\tau\infl_\Omega^\tau$ are naturally equivalent to the identity functor. Therefore, we get that
\[
\codef^\rho_\Omega\res^\mu_\rho k\mathcal R_{\mathbb Q} \cong \defl^\tau_\Omega\res^\mu_\tau k\mathcal R_{\mathbb Q}.
\]
In particular,  
\[
\codef^\rho_\Omega\res^\mu_\rho k\mathcal R_{\mathbb Q}(H) = 0
\]
unless $H$ is cyclic and hence no element $\zeta$ with the above prescribed properties exists in $k\mathcal R_{\mathbb Q}
(H)$. Therefore when $H$ is not cyclic, the Burnside functor is not a direct summand of the biset functor 
$\ind_\mu^\Gamma S_{H,V}$.

On the other hand, if $H$ is cyclic, then by the above results,
\[
\codef^\rho_\Omega\res^\mu_\rho k\mathcal R_{\mathbb Q}(H) \cong k
\]
as $k\out(H)$-modules. Therefore all elements in this module are $\out(H)$-fixed and this completes the proof of the following 
lemma.
\begin{lem}\label{lem:burn} The multiplicity $m_B$ of the Burnside functor $kB^G$ as a direct summand in the biset functor 
$\ind_\mu^\Gamma S_{H,V}^\mu$ is given by 
\[
m_B = \left\{
        \begin{array}{ll}
            1 & \quad \mbox{\it if $H$ is cyclic and $V$ is trivial}. \\
            0 & \quad otherwise.
        \end{array}
    \right.
\]
\end{lem}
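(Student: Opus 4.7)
The plan is to reduce $m_B$ to a $k\out(H)$-module Hom computation, then identify the relevant module by replacing the simple biset functor $S_{1,1}^\Gamma$ with the rational-representation functor $k\Rr_\QQ$ and simplifying via the native Mackey adjunctions. By Proposition \ref{codefres}, we have
\[
m_B = \dim_k \hom_{k\out(H)}\!\big(V,\; \codef^\rho_\Omega \res^\Gamma_\rho S_{1,1}^\Gamma(H)\big),
\]
so the task reduces to describing the $k\out(H)$-module $\codef^\rho_\Omega \res^\Gamma_\rho S_{1,1}^\Gamma(H)$ explicitly.

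The key step is to replace $S_{1,1}^\Gamma$ by $k\Rr_\QQ$ via \cite[Proposition 4.4.8]{B96}, and then to collapse the composition $\codef^\rho_\Omega \res^\mu_\rho$ applied to $k\Rr_\QQ$ down to $\defl^\tau_\Omega \res^\mu_\tau k\Rr_\QQ$. To do this I would use Equation (\ref{eqn:twin}) to write $k\Rr_\QQ \cong \coind_\tau^\mu \infl_\Omega^\tau \defl^\tau_\Omega \res^\mu_\tau k\Rr_\QQ$, apply the commutation rule $\res^\mu_\rho \coind_\tau^\mu \cong \coind_\Omega^\rho \res^\tau_\Omega$ from Theorem \ref{thm:native-equivs}, and then exploit the natural equivalences $\codef^\rho_\Omega \coind_\Omega^\rho \cong \mathrm{Id}_\Omega$ and $\res^\tau_\Omega \infl_\Omega^\tau \cong \mathrm{Id}_\Omega$ from the same theorem.

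Once this simplification is in place, Artin's Induction Theorem finishes the job: by definition of the deflation functor, $\defl^\tau_\Omega \res^\mu_\tau k\Rr_\QQ(H)$ is the quotient of $k\Rr_\QQ(H)$ by the sum of images of inductions from proper subgroups, which by Artin vanishes unless $H$ is cyclic. When $H$ is cyclic, this quotient is one-dimensional (using the fact that the induced characters $\ind_L^H 1$ for $L \le H$ form a basis of $k\Rr_\QQ(H)$) and has trivial $\out(H)$-action since $\ind_H^H 1 = 1$ is $\out(H)$-fixed. Therefore $\codef^\rho_\Omega \res^\Gamma_\rho S_{1,1}^\Gamma(H)$ is either zero (for $H$ non-cyclic) or the trivial $k\out(H)$-module (for $H$ cyclic), and the lemma follows from Schur's lemma applied to the simple module $V$.

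The main obstacle is the chain of natural isomorphisms collapsing $\codef^\rho_\Omega \res^\mu_\rho$ to $\defl^\tau_\Omega \res^\mu_\tau$ when composed with the identifications coming from semisimplicity; everything afterwards is either standard representation theory of finite groups or a direct application of Schur's lemma. One should also be careful that the identification $S_{1,1}^\Gamma \cong k\Rr_\QQ$ is used as biset functors and hence restricts to an isomorphism of native Mackey functors before the adjunction manipulations can be applied.
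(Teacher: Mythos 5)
Your proposal matches the paper's argument step by step: reduce to a $k\out(H)$-Hom computation via Proposition \ref{codefres}, identify $S_{1,1}^\Gamma$ with $k\mathcal R_{\mathbb Q}$, collapse $\codef^\rho_\Omega\res^\mu_\rho$ to $\defl^\tau_\Omega\res^\mu_\tau$ through the native Mackey equivalences of Theorem \ref{thm:native-equivs}, and finish with Artin's induction theorem and Schur's lemma. The only small slip is that Equation (\ref{eqn:twin}) literally gives the form $M\cong\ind_\rho^\mu\infl_\Omega^\rho\defl_\Omega^\tau\res^\mu_\tau M$; to pass to the coinduction form $k\mathcal R_{\mathbb Q}\cong\coind_\tau^\mu\infl_\Omega^\tau\defl^\tau_\Omega\res^\mu_\tau k\mathcal R_{\mathbb Q}$ that your cancellation needs, you must also invoke Equation (\ref{eqn:simples}) (equivalently, the semisimplicity of $\mu$), exactly as the paper does via Proposition \ref{thm:nativerat}.
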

Combining the above lemma with Corollary \ref{quotient}, we get the following theorem.
\begin{thm} Let $H$ be a simple group. 
\begin{enumerate}
\item If $H$ is of prime order, and $1$ is the trivial $k\out(H)$-module, then 
\[
\ind_\mu^\Gamma S_{H,1}^\mu \cong P_{H,1}^\Gamma \oplus P_{1,1}^\Gamma.
\]
\item Otherwise, if $H$ is cyclic but $V$ is non-trivial, or if $H$ is not cyclic, then 
\[
\ind_\mu^\Gamma S_{H,V}^\mu \cong P_{H,V}^\Gamma.
\]
\end{enumerate}
\end{thm}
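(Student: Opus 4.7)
The plan is to combine three results already established in the paper and exploit the very restrictive normal subgroup structure of a simple group. Since $H$ is simple, its only normal subgroups are $1$ and $H$, so its only quotients (up to isomorphism) are $1$ and $H$ itself. This will collapse the decomposition of $\ind_\mu^\Gamma S_{H,V}^\mu$ into just a handful of possible summands.

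First I would apply Corollary \ref{quotient}: every indecomposable summand $P_{K,W}^\Gamma$ of $\ind_\mu^\Gamma S_{H,V}^\mu$ must have $K$ a quotient of $H$. By simplicity of $H$, this forces $K \cong H$ or $K \cong 1$. So
\[
\ind_\mu^\Gamma S_{H,V}^\mu \cong \bigoplus_{W} n_{H,W}^{H,V}\, P_{H,W}^\Gamma \;\oplus\; n_{1,1}^{H,V}\, P_{1,1}^\Gamma,
\]
where $W$ runs over isomorphism classes of simple $k\out(H)$-modules.

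Next I would pin down the $K\cong H$ contribution via Corollary \ref{corollary:top}: the multiplicity $n_{H,W}^{H,V}$ is $1$ when $W\cong V$ and $0$ otherwise. Thus the $H$-indexed part of the decomposition is exactly $P_{H,V}^\Gamma$, with no other $k\out(H)$-modules $W$ contributing.

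Finally, I would determine the multiplicity $n_{1,1}^{H,V}$ of the Burnside functor $P_{1,1}^\Gamma \cong kB^G$ by invoking Lemma \ref{lem:burn}: this multiplicity equals $1$ when $H$ is cyclic and $V$ is the trivial module, and $0$ otherwise. For a simple group $H$, being cyclic is equivalent to having prime order, so case (1) of the theorem corresponds precisely to $H$ of prime order with $V = 1$, yielding the summand $P_{1,1}^\Gamma$, while case (2) (either $H$ cyclic of prime order with nontrivial $V$, or $H$ non-cyclic simple) yields no Burnside summand. There is no real obstacle here since all the heavy lifting has been done: the essential input is the classification of quotients of $H$ (trivially handled by simplicity) together with Corollary \ref{corollary:top} and Lemma \ref{lem:burn}; putting them together produces the claimed splitting.
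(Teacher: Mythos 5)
Your proposal is correct and follows essentially the same route as the paper: Corollary \ref{quotient} restricts the possible summands to those indexed by quotients of $H$ (hence only $H$ and $1$), Corollary \ref{corollary:top} pins down the $H$-indexed part as exactly one copy of $P_{H,V}^\Gamma$, and Lemma \ref{lem:burn} gives the multiplicity of the Burnside functor. The paper compresses the appeal to Corollary \ref{corollary:top} into a single clause, but the argument is the same.
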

Note that when $H$ is a simple, non-cyclic group and $V$ is non-trivial, the functor $\ind_\mu^\Gamma S_{H,V}^\mu$ gets 
easier. Indeed, in this case, we have $\ind_\rho^\nabla S_{H,V}^\rho \cong S_{H,V}^\nabla$ by Theorem \ref{pro:rhoTonabla}. 
To prove this isomorphism, note that, for any $K\in\sq(G)$, we have 
\[
\ind_\rho^\nabla S_{H,V}^\rho (K) = 0
 \]
if $K$ is not isomorphic to a quotient of $H$. But since $H$ is simple this means that the above evaluation is equal to zero
unless $K = H$ or $K = 1$. Now the evaluation at $H$ is equal to $V$ by the definition of the induction functor. Also, we have
\[
\ind_\rho^\Delta S_{H,V}'\rho (1) = kX(1,H)\otimes_{k\out(H)} V \cong k\otimes_{k\out(H)} V = 0.
\]
Therefore we get
\[
\ind_\rho^\nabla S_{H,V}^\rho(1) = 0
\]
and hence the isomorphism
\[
\ind_\rho^\nabla S_{H,V}^\rho \cong S_{H,V}^\nabla
\]
of $\nabla$-modules hold.
Note that, when $V=1$ is the trivial $k\out(H)$-module, we have $k\otimes_{k\out(H)} k = k$, and hence 
$\ind_\rho^\nabla S_{H,V} (1) = k$. In particular, the above isomorphism does not hold when $V$ is trivial.
We can summarize this result as follows.
\begin{cor} Assume the notation of the above theorem and suppose that $H$ is not cyclic and $V$ is non-trivial. Then there is an 
isomorphism
$$P_{H,V}^\Gamma \cong \ind_\mu^\Gamma S_{H,V}^\mu \cong \ind_\nabla^\Gamma S_{H,V}^\nabla$$
of biset functors.
\end{cor}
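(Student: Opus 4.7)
The first isomorphism $P_{H,V}^\Gamma \cong \ind_\mu^\Gamma S_{H,V}^\mu$ is supplied directly by part~(2) of the preceding theorem, since by hypothesis $H$ is non-cyclic. So the content of the corollary lies in the second isomorphism, and the plan is to identify $\ind_\mu^\Gamma S_{H,V}^\mu$ with $\ind_\nabla^\Gamma S_{H,V}^\nabla$.

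To do this, I would first invoke Theorem~\ref{isomtheo} to rewrite $\ind_\mu^\Gamma S_{H,V}^\mu$ as $\ind_\rho^\Gamma S_{H,V}^\rho$, and then factor this induction through the subalgebra $\nabla$ by writing $\ind_\rho^\Gamma \cong \ind_\nabla^\Gamma \ind_\rho^\nabla$. This reduces the task to proving an isomorphism $\ind_\rho^\nabla S_{H,V}^\rho \cong S_{H,V}^\nabla$ of $\nabla$-modules. Since both sides are determined by their evaluations on the isomorphism classes of subquotients of $G$, it suffices to compare these evaluations pointwise.

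For the evaluation I would use the description of $\ind_\rho^\nabla$ from Theorem~\ref{pro:rhoTonabla} in the appendix, which writes $\ind_\rho^\nabla S_{H,V}^\rho(K) \cong \bigoplus_{R \cong H} kX(K,R) \otimes_{k\out(R)} V$. Because $S_{H,V}^\rho$ is supported only on the isomorphism class of $H$, and because $H$ is simple (so the only quotients up to isomorphism are $H$ and the trivial group), the induced functor is supported at these two classes. At $K \cong H$ the formula returns $V$, matching $S_{H,V}^\nabla(H)$. At $K = 1$ it collapses to $kX(1,H) \otimes_{k\out(H)} V \cong k \otimes_{k\out(H)} V$, where $k$ carries the trivial $\out(H)$-action.

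The main obstacle, and the step where the hypotheses really enter, is showing that this last tensor product vanishes. The space $k \otimes_{k\out(H)} V$ is the module of $\out(H)$-coinvariants of $V$, and because $k\out(H)$ is semisimple of characteristic zero, coinvariants agree with invariants and vanish on every non-trivial simple module. So non-triviality of $V$ gives $k \otimes_{k\out(H)} V = 0 = S_{H,V}^\nabla(1)$, and the two $\nabla$-modules agree at every subquotient. The assumption that $V$ is non-trivial is essential here: were $V$ trivial, the coinvariants would be $k$ and, consistently with Lemma~\ref{lem:burn}, an extra Burnside-type contribution at $1$ would appear and the identification with $\ind_\nabla^\Gamma S_{H,V}^\nabla$ would break.
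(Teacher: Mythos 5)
Your proposal is correct and follows essentially the same route as the paper: invoke part (2) of the theorem for the first isomorphism, reduce to showing $\ind_\rho^\nabla S_{H,V}^\rho \cong S_{H,V}^\nabla$ via Theorem~\ref{pro:rhoTonabla}, and use simplicity of $H$ to restrict attention to the evaluations at $H$ and at $1$, where $kX(1,H)\otimes_{k\out(H)}V \cong k\otimes_{k\out(H)}V$ vanishes precisely because $V$ is a non-trivial simple module. Your added remark that the tensor product is the module of $\out(H)$-coinvariants (which in characteristic zero agree with invariants) is a clean justification of the vanishing that the paper leaves implicit.
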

Note that the induction functor $\ind_\nabla^\Gamma$ can be evaluated using \cite[Theorem 5.1]{C3}.

\begin{prob} By Corollary \ref{cor:non} and Lemma \ref{lem:burn}, the biset functor $\ind_\mu^\Gamma S_{H,1}^\mu$ is not
indecomposable if $H$ is a $p$-group for some prime $p$ or a cyclic group. Also in some other groups of small order, not 
covered by these results, we showed, by hand, that the above functor has at least two direct summands. We do not include these
calculations. However we could not answer
whether $\ind_\mu^\Gamma S_{H,V}^\mu$ being indecomposable (and hence isomorphic with $P_{H,V}^\Gamma$)
characterizes simple groups, and leave it as open.
\end{prob}
\appendix
\section{The functor $\ind_\rho^\nabla$}

In this section, we describe the functor $\ind_{\rho(G)}^{\nabla(G)} = \nabla(G)\otimes_{\rho(G)}$. For simplicity, we write $\nabla 
= \nabla(G)$ and $\rho = \rho(G)$. For our aims, it is sufficient to describe the functor
only for atomic functors, that is, for functors which are non-zero only on one isomorphism class of subquotients. Thus, let 
$D_{H,V}$ be the $\rho$-module such that $D_{H,V}(K) = 0$ if $K$ is not isomorphic to $H$ and $D_{H,V}(H) = V$. Note that,
here, the module $V$ is not necessarily simple. Further let $K$ be a subquotient of $G$. We want to describe
\[
M:=\ind_\rho^\nabla D_{H,V} (K) = \iso_{K,K}^{{\rm id}}\nabla\otimes_\rho D_{H,V}
\]
where $\iso_{K,K}^{{\rm id}}$ is the $(K,K)$-biset $K$ where the group $K$ acts on both sides via multiplication.
By \cite[Theorem 3.3]{C3}, the algebra $\nabla$
has a $k$-basis consisting of all triples $[R,L,\phi:L\ra K]$ where $R$ runs over all subquotients of $G$, $L$ runs over all 
subquotients 
of $R$, up to conjugation, $K$ runs over all subquotients of $G$ and $\phi:L\ra K$ is an isomorphism, taken up to $(K,R)$-
conjugacy, in the sense of \cite[Section 3]{C3}. Thus we can write
\begin{eqnarray*}
M &=&\iso_{K,K}^{{\rm id}} \nabla\otimes_\rho e_HD_{H,V}\\
&=& \bigoplus_{[R,L,\phi:L\ra K], R\cong H} k\iso_{K,L}^\phi\des^R_Le_H\otimes_\rho e_HD_{H,V}.
\end{eqnarray*}
where $e_H$ is the sum of $\iso_{R,R}^{{\rm id}}$ where $R\cong H$ runs over all subquotients of $G$ isomorphic to $H$. Now 
if $L$ is a proper subgroup of $R$, then the corresponding summand will be equal to zero since the restriction map $\des^R_L$
annihilates $D_{H,V}$. Thus the sum becomes
\begin{eqnarray*}
M &=& \bigoplus_{\substack{[R,R/N,\phi:R/N\ra K], \\R\cong H}} k\iso_{K,R/N}^\phi\des^R_{R/N}e_H\otimes_\rho e_HD_{H,V}.
\end{eqnarray*}
Let $\iso(L,K)$ denote the set of all isomorphisms from $L$ to $K$. Then the above sum can be written as follows.
\begin{eqnarray*}
M &=& \bigoplus_{\substack{R\in {\tiny\sq}(G), N\unlhd R \\R\cong H, R/N\cong K \\ \phi\in\iso(R/N,K)}} k\iso_{K,R/N}^\phi\des^R_{R/N}e_H\otimes_\rho e_HD_{H,V}.
\end{eqnarray*}
Now there is a left $K$-action and a right $R$ action on the set $\iso(R/N,K)$ which are trivial on the tensor product. Thus the 
above sum reduces to 
\begin{eqnarray*}
M &=& \bigoplus_{\substack{R\in {\tiny\sq}(G), N\unlhd R \\R\cong H, R/N\cong K \\ \phi\in K\backslash\iso(R/N,K)/R}} k\iso_{K,R/N}^\phi\des^R_{R/N}e_H\otimes_\rho e_HD_{H,V}\\
&=& \bigoplus_{\substack{R\in {\tiny\sq}(G), N\unlhd R \\R\cong H, R/N\cong K \\ \phi\in K\backslash\iso(R/N,K)/R}} k\iso_{K,R/N}^\phi\des^R_{R/N}\otimes_{k\out(R)} V(R)\\
\end{eqnarray*}
where $V(R) := D_{H,V}(R)$. To simplify the above sum, let 
\[
X(K,R) = \bigsqcup_{N\unlhd R: R/N\cong K} K\backslash \iso(R/N,K) / R.
\]
The set $X(K,R)$ is an $(\out(K),\out(R))$-biset, via pre and post compositions. Now we have
\[
M= \bigoplus_{R\in{\tiny\sq}(G): R\cong H} kX(K,R) \otimes_{k\out(R)} V(R).
\]
This completes the proof of the following result.
\begin{thm}\label{pro:rhoTonabla} Let $V$ be a $k\out(H)$-module. Denote by $D_{H,V}$ the $\rho$-module 
which is zero on all
subquotients of $G$ not isomorphic to $H$ and $D_{H,V}(H) = V$. Let $K$ be a subquotient of $G$. Then
\[
\ind_\rho^\nabla D_{H,V}(K) = \bigoplus_{R\in{\tiny\sq}(G): R\cong H} kX(K,R) \otimes_{k\out(R)} V(R)
\]
where $X(K,R)$ is the $(\out(K),\out(R))$-biset defined above.
\end{thm}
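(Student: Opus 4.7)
The plan is to compute the evaluation $M := \ind_\rho^\nabla D_{H,V}(K) = \iso_{K,K}^{\rm id}\nabla \otimes_\rho D_{H,V}$ by unpacking the tensor product over the explicit $k$-basis of $\nabla$ provided by \cite[Theorem 3.3]{C3}. Since $D_{H,V}$ is an atomic $\rho$-module supported only on the isomorphism class of $H$, most basis elements will act trivially on $D_{H,V}$, and the whole computation reduces to a careful bookkeeping of which basis vectors survive.

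First, I would write out an arbitrary element of $\iso_{K,K}^{\rm id}\nabla$ in the $\nabla$-basis as $\iso_{K,L}^\phi \des^R_L$ indexed by triples $[R, L, \phi:L\to K]$ with $R\in\sq(G)$, $L$ a subquotient of $R$ up to conjugation, and $\phi$ an isomorphism taken up to $(K,R)$-conjugacy. Tensoring with $D_{H,V}$ forces $R\cong H$ on the right factor (otherwise the target is zero), and the restriction $\des^R_L$ would kill $D_{H,V}(R) = V$ unless $L = R$ as a subquotient; concretely, viewing $L$ as a subquotient of $R$ means $L = R/N$ for some $N \unlhd R$ with $R/N\cong K$.

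Second, once the sum has been cut down to triples $(R, N, \phi)$ with $R\cong H$, $N\unlhd R$, $R/N\cong K$, and $\phi\in\iso(R/N, K)$ taken up to $(K,R)$-conjugacy, I would translate the equivalence on isomorphisms into a genuine double-coset description. The left $K$-action (by post-composition) and the right $R$-action (by pre-composition via the quotient $R\to R/N$) on $\iso(R/N, K)$ are absorbed into the tensor product; the left $K$-action becomes trivial under $\iso_{K,K}^{\rm id}$, while the right $R$-action is absorbed by the $\rho$-module structure. This gives the summation
\[
M \;\cong\; \bigoplus_{\substack{R\in\sq(G),\; N\unlhd R \\ R\cong H,\; R/N\cong K \\ \phi\in K\backslash\iso(R/N,K)/R}} k\,\iso_{K,R/N}^\phi\,\des^R_{R/N}\otimes_{k\out(R)} V(R),
\]
where the $k\out(R)$-tensor appears because, after collecting isomorphic copies of $R$ together, the residual $R$-action on the quotient structure factors through $\out(R)$.

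Third, I would package the disjoint union over $N\unlhd R$ with $R/N\cong K$ of the double-coset sets $K\backslash \iso(R/N, K)/R$ as the $(\out(K), \out(R))$-biset $X(K, R)$, noting that the pre- and post-composition actions by $K$ and $R$ descend to $\out(K)$ and $\out(R)$ actions on $X(K,R)$. This recasts the expression above as the asserted formula $M \cong \bigoplus_{R\cong H} kX(K,R)\otimes_{k\out(R)} V(R)$.

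The main obstacle is not any deep idea but the careful combinatorial bookkeeping: verifying that the $(K,R)$-conjugacy equivalence in the $\nabla$-basis is exactly the double-coset relation $K\backslash\iso(R/N,K)/R$, and that the $R$-action which remains after absorbing the restriction map genuinely factors through $\out(R)$ so that the tensor product $\otimes_{k\out(R)}$ is well-defined and matches the structure of $X(K,R)$ as an $\out(R)$-set on the right.
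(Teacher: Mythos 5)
Your proof follows essentially the same route as the paper's: expanding $\iso_{K,K}^{\rm id}\nabla\otimes_\rho D_{H,V}$ over the $\nabla$-basis of \cite[Theorem 3.3]{C3}, cutting to triples with $R\cong H$ and $L=R/N$ because $D_{H,V}$ is atomic, collapsing to double cosets $K\backslash\iso(R/N,K)/R$, reducing the $\rho$-tensor to a $k\out(R)$-tensor, and repackaging as $kX(K,R)\otimes_{k\out(R)}V(R)$. The only difference is that you spell out why the $K$- and $R$-actions trivialize (inner automorphisms absorbed by $\iso_{K,K}^{\rm id}$ on the left and by the $\rho$-module structure on the right), a point the paper asserts without elaboration.
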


\noindent\textbf{References}

\end{document}